\newcommand{\scr}{\mathscr}
\newtheorem*{theorem*}{Theorem}
\newtheorem{question}{Question}
\def\antiddot{\mathinner{\mkern1mu\raise1pt\vbox{\kern7pt\hbox{.}}\mkern2mu
		\raise4pt\hbox{.}\mkern2mu\raise7pt\hbox{.}\mkern1mu}}
\newcommand{\CC}{{\mathbb C}}
\newcommand{\GG}{{\mathbb G}}
\newcommand{\KK}{{\mathbb K}}
\newcommand{\PP}{{\mathbb P}}
\newcommand{\QQ}{{\mathbb Q}}
\newcommand{\ZZ}{{\mathbb Z}}
\newcommand{\LL}{{\mathbb L}}
\newcommand{\HH}{{\rm{H}}}
\newcommand{\id}{{\rm{id}}}
\newcommand{\coker}{{\rm{coker}\,}}
\newcommand{\s}{\mathcal}
\newcommand{\sD}{{\s D}}
\newcommand{\sF}{{\s F}}
\newcommand{\sG}{{\s G}}
\newcommand{\sH}{{\s H}}
\newcommand{\sI}{{\s I}}
\newcommand{\sJ}{{\s J}}
\newcommand{\sK}{{\s K}}
\newcommand{\sL}{{\s L}}
\newcommand{\sM}{{\s M}}
\newcommand{\sN}{{\s N}}
\newcommand{\sO}{{\s O}}
\newcommand{\sT}{{\s T}}
\newcommand{\sV}{{\s V}}
\newcommand{\sW}{{\s W}}
\newcommand{\tensor}{\otimes}
\newcommand{\punkt}{\hspace{-.3ex}\raise.15ex\hbox to1ex{\Huge.}}
\DeclareMathOperator{\Pic}{Pic}
\DeclareMathOperator{\Hilb}{Hilb}
\DeclareMathOperator{\Spec}{Spec}
\DeclareMathOperator{\sHom}{\sH om}
\DeclareMathOperator{\supp}{supp}
\DeclareMathOperator{\image}{image}
\DeclareMathOperator{\pd}{pd}
\DeclareMathOperator{\PGL}{PGL}
\DeclareMathOperator{\Tor}{Tor}
\DeclareMathOperator{\codim}{codim}
\DeclareMathOperator{\rank}{rank}
\newcommand{\Mac}{{\texttt {Macaulay2 }}}
\newtheorem{theorem}{Theorem}[section]
\newtheorem{lemma}[theorem]{Lemma}
\newtheorem{proposition}[theorem]{Proposition}
\newtheorem{corollary}[theorem]{Corollary}
\theoremstyle{definition}
\newtheorem{definition}[theorem]{Definition}
\newtheorem{remark}[theorem]{Remark}
\newtheorem{example}[theorem]{Example}
\newtheorem{algorithm}[theorem]{Algorithm}
\theoremstyle{definition}
\newtheorem*{notation*}{Notation}
\newtheorem*{ack}{Acknowledgements}
\numberwithin{table}{section}
\newcommand{\hh}{{\rm{h}}}
\newcommand\rcell{\cellcolor{red!10}}
\newcommand\bcell{\cellcolor{blue!10}}
\newcommand{\bettit}[1]{
	\begin{array}{c|cccc}
		& 	\rule{0.5ex}{0pt} 0 \rule{0.5ex}{0pt} &	\rule{0.5ex}{0pt} 1 \rule{0.5ex}{0pt} & \rule{0.5ex}{0pt} 2 \rule{0.5ex}{0pt} &		\rule{0.5ex}{0pt} 3 \rule{0.5ex}{0pt}\\ \hline
		#1
	\end{array}}
\newcommand{\bettif}[1]{
	\begin{array}{c|ccccc}
		& 	\rule{0.5ex}{0pt} 0 \rule{0.5ex}{0pt} &	\rule{0.5ex}{0pt} 1 \rule{0.5ex}{0pt} & \rule{0.5ex}{0pt} 2 \rule{0.5ex}{0pt} &		\rule{0.5ex}{0pt} 3 \rule{0.5ex}{0pt} & \rule{0.5ex}{0pt} 4 \rule{0.5ex}{0pt}  \\ \hline
		#1
	\end{array}}
\DeclareMathOperator{\Vi}{V}
\let\image\relax 
\DeclareMathOperator{\image}{im}
\def\Ddots{\mathinner{\mkern1mu\raise\p@
		\vbox{\kern7\p@\hbox{.}}\mkern2mu
		\raise4\p@\hbox{.}\mkern2mu\raise7\p@\hbox{.}\mkern1mu}}
\begin{document}
	
	\title{Matrix factorizations and curves in $\PP^4$}
	\author{Frank-Olaf Schreyer}
	\address{Mathematik und Informatik\\
		Universit\"at des Saarlandes\\
		Campus E2.4\\
		D-66123 Saarbr\"ucken\\
		Germany}
	\email{schreyer@math.uni-sb.de}
	\author{Fabio Tanturri}
	\address{Institut de Math\'ematiques de Marseille\\
		Aix-Marseille Universit\'e\\
		Technop\^ole Ch\^ateau-Gombert\\
		13453 Marseille Cedex 13\\
		France }
	\email{fabio.tanturri@univ-amu.fr}
	
	\subjclass[2010]{}
	
	\keywords{}
	
	\date{\today}
	
	\begin{abstract}
	Let $C$ be a curve in $\PP^4$ and $X$ be a hypersurface containing it. We show how it is possible to construct a matrix factorization on $X$ from the pair $(C,X)$ and, conversely, how a matrix factorization on $X$ leads to curves lying on $X$. We use this correspondence to prove the unirationality of the Hurwitz space $\sH_{12,8}$ and the uniruledness of the Brill--Noether space $\sW^1_{13,9}$. Several unirational families of curves of genus $16 \leq g \leq 20$ in $\PP^4$ are also exhibited. 
	\end{abstract}
	
	\maketitle
	
	\section*{Introduction}
	
	The moduli space $\sM_g$ of curves of genus $g$ is known to be unirational for $g \leq 14$ \cite{Severi, SernesiUnirationality, ChangRanUnirationality, VerraUnirationality}, while for $g=22$ or $g \geq 24$ it is proved to be of general type \cite{HarrisMumfordKodaira, EisenbudHarrisKodaira, FarkasGeometry, FarkasBirational}. For the cases in between, only partial results are available: $\sM_{23}$ has positive Kodaira dimension \cite{FarkasGeometry}, $\sM_{15}$ is rationally connected \cite{ChangRanKodaira, BrunoVerraRationally} and $\sM_{16}$ is uniruled \cite{ChangRanSlope, FarkasBirational}.
	
	Similarly, the unirationality  of  Hurwitz spaces $\sH_{g,d}$ parameterizing $d$-sheeted branched simple covers of the projective line by smooth curves of genus $g$ is of fundamental interest. For small values of $d$ or $g$ they are proven to be unirational, but for larger values few results are known. See Section \ref{unirationalHurwitzSpaces} for a  discussion on the known results.

	In this paper we introduce a correspondence between (general) curves $C$ in $\PP^4$ with fixed genus and degree, together with a hypersurface $X\supset C$, and the space of certain matrix factorizations on $X$. This leads to a new technique to construct curves in $\PP^4$, which has been positively used by Schreyer \cite{SchreyerMatrix} in the particular case of curves of genus $15$ and degree $16$.
	
	The goal of this paper, in addition to showing how matrix factorizations can be used to construct curves in $\PP^4$, is to use this technique to  prove new positive results. Our main contribution is the following
	
	\begin{theorem*}[Theorem \ref{unirationalityThm}]
	$\sH_{12,8}$ is unirational.
	\end{theorem*}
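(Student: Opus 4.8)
The theorem asserts that the Hurwitz space $\sH_{12,8}$, parameterizing degree-$8$ simple branched covers of $\PP^1$ by smooth genus-$12$ curves, is unirational. My plan is to realize a general such cover as a curve $C \subset \PP^4$ via an appropriate linear system, and then exploit the matrix-factorization correspondence announced in the introduction to parameterize these curves rationally.

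**Setting up the projective model.** A degree-$8$ cover of $\PP^1$ gives a $g^1_8$ on a genus-$12$ curve $C$. First I would fix a suitable very ample linear system embedding $C$ into $\PP^4$: by Riemann--Roch and a general-position argument one expects a general genus-$12$ curve with a $g^1_8$ to carry a $g^4_d$ for an appropriate degree $d$ (so that $\deg C = d$ in $\PP^4$), yielding a nondegenerate embedded curve of the right numerical type. The key step is then to identify a hypersurface $X \supset C$ whose degree and singularity structure make the matrix-factorization machinery apply: I would count conditions to show that $C$ lies on a hypersurface $X$ of the expected degree, and that the resulting matrix factorization has the predicted size, so that the space of such factorizations is governed by a genuinely rational parameter space.

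**Reversing the correspondence.** The heart of the argument uses the backward direction of the correspondence: starting from a matrix factorization on $X$, one produces curves lying on $X$. I would parameterize the matrix factorizations by a space of matrices with polynomial entries subject to the factorization condition $\Phi \Psi = f \cdot \mathrm{Id}$, cut out inside an affine (hence rational) space of matrices. The plan is to show that the generic such datum is unobstructed and yields a smooth curve $C$ of the correct genus and degree carrying the desired $g^1_8$, so that the construction sweeps out a dense subset of $\sH_{12,8}$. Establishing a dominant rational map from this affine matrix space onto $\sH_{12,8}$, together with rationality of the fibers or of the total parameter space, gives unirationality.

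**The main obstacle.** The delicate point will be controlling the generic behavior of the reverse construction: I expect it to be nontrivial to prove that for a general matrix factorization the associated scheme is a \emph{smooth, irreducible} curve of exactly genus $12$ and degree $d$, and that the induced pencil is basepoint-free of degree $8$ giving a \emph{simple} branched cover. This typically requires an explicit semicontinuity argument---exhibiting at least one matrix factorization over a finite field or over $\QQ$ for which a computer algebra verification (via \Mac or \Sing) confirms smoothness, the Hilbert polynomial, and the expected cohomology---and then invoking upper-semicontinuity to propagate these open conditions to the generic point. Finally I would verify that the parameter space dominating $\sH_{12,8}$ has dimension at least $\dim \sH_{12,8} = 3g-3+d-2 = 2g+2d-5$ (with $d$ the cover degree), confirming dominance by a dimension count matched against the fiber dimension of the map sending a matrix factorization to its pencil.
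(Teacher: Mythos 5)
Your overall skeleton is the right one --- pass to the Serre-dual model in $\PP^4$, use the matrix-factorization correspondence in reverse, verify open conditions on an explicit example over a finite field, and conclude by semicontinuity --- and it matches the paper's strategy in outline. (One small point you leave vague: the dual model is pinned down immediately by Serre duality, since $(C,L)\in\sW^1_{12,8}$ gives $(C,K_C-L)\in\sW^4_{12,14}$, so $d=14$; this is not an ``expected $d$ from general position'' but an exact numerical translation.)

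The genuine gap is in your third step. You propose to ``parameterize the matrix factorizations by a space of matrices with polynomial entries subject to $\Phi\Psi=f\cdot\mathrm{Id}$, cut out inside an affine (hence rational) space of matrices.'' Being cut out inside a rational ambient space confers no rationality or unirationality whatsoever: the factorization condition is a system of quadratic equations, and the resulting variety is a proper closed subvariety which a priori need not be unirational, nor even irreducible --- indeed the paper explicitly poses the irreducibility of the space of matrix factorizations of shape (\ref{shape1412}) on a general cubic as an open question. Producing a \emph{unirational, dominant} family of matrix factorizations of the right shape is precisely the hard content of the theorem, and your proposal assumes it. The paper's solution is to manufacture such a family from auxiliary curves: a dominant unirational family of curves $E$ of genus $10$ and degree $13$ (built from Gei\ss's liaison construction on $\PP^1\times\PP^2$ with five unirationally chosen marked points, then embedded by $|K_E-D|$; see Algorithm \ref{algorithmUnirat}), whose modules $S/I_E$ induce matrix factorizations of shape (\ref{shape1412}) on cubics $X\supset E$ via the Shamash construction (Proposition \ref{auxiliaryE}). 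Moreover, your dominance argument by dimension count is replaced in the paper by something sharper and actually needed: Proposition \ref{reconstruction1310} shows one can \emph{recover} curves $E\in H_{13,10}$ from a general matrix factorization of this shape (a $\PP^4$ of choices), so the $E$-family dominates the relevant component of the space of matrix factorizations; since the curves $C$ of genus $12$ and degree $14$ are then obtained from the same component through rational fibers $\GG(2,5)$ (Remark \ref{grassmannians}, Theorem \ref{constructionThm}), the constructed family dominates $H_{14,12}$, hence $\sW^4_{12,14}$, and unirationality of $\sW^1_{12,8}$ and $\sH_{12,8}$ follows. Without the auxiliary-module idea and the recovery statement, your plan stalls exactly at the point where the theorem's difficulty lives.
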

	
	To prove this result, we construct explicitly a unirational dominant family of curves of genus 12 and degree 14 in $\PP^4$ by means of matrix factorizations, showing thus that the Brill--Noether space $\sW^4_{12,14}$ is unirational. A general point $(C,L)$ in $\sW^4_{12,14}$ gives rise to a point $(C,K_C-L)$ in $\sW^1_{12,8}$ and conversely, whence the unirationality of $\sW^1_{12,8}$ and $\sH_{12,8}$. 
	
	The study of the correspondence between curves and matrix factorizations in another particular case leads to a very cheap proof of the following
		
	\begin{theorem*}[Corollary \ref{unirulednessThm}]
	$\sW^1_{13,9}$ is uniruled.
	\end{theorem*}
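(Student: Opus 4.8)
The plan is to reduce to a statement about curves in $\PP^4$ and then to produce, through a general point, an explicit rational curve by deforming a single matrix factorization. First I would record the Serre-duality isomorphism $(C,L)\mapsto(C,K_C-L)$, which identifies $\sW^1_{13,9}$ with $\sW^4_{13,15}$: indeed $\deg(K_C-L)=2\cdot 13-2-9=15$ and $h^0(K_C-L)=h^0(L)+3$, so a $g^1_9$ corresponds to a $g^4_{15}$. Since $\rho=13-2(13-9+1)=3\ge 0$, both spaces are irreducible, and it suffices to prove that $\sW^4_{13,15}$ is uniruled. For a general such pair the system $|K_C-L|$ embeds $C$ as a smooth curve of degree $15$ and genus $13$ in $\PP^4$, so I may work entirely with embedded curves.

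Next I would start from a general curve $C\subset\PP^4$ of this type and apply the correspondence of the previous sections: choosing a hypersurface $X\supset C$ of the appropriate degree yields a matrix factorization $\mathcal M_0$ on $X$ whose associated curve is $C$. The essential point is that the matrix factorizations on $X$ with the relevant format form a rational variety — an open subset of a space of matrices with entries in prescribed graded pieces, cut out by the factorization equations — so I can choose a line (or any rational curve) $t\mapsto\mathcal M_t$ through $\mathcal M_0$ inside it. Applying the converse direction of the correspondence to each $\mathcal M_t$ produces a family $\{C_t\}$ of curves of genus $13$ and degree $15$ on $X$, with $C_0=C$, parametrized by a rational base. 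This gives a candidate rational curve $t\mapsto[(C_t,K_{C_t}-\sO_{C_t}(1))]$ in $\sW^4_{13,15}$ passing through the general point $[(C,K_C-L)]$; because $C$ was an arbitrary general curve, a general point of $\sW^4_{13,15}$ lies on such a rational curve, which is exactly uniruledness. Note that, in contrast with the main theorem, I do not attempt to parametrize the whole space rationally — I only move one parameter — which is why this argument yields uniruledness so cheaply but not unirationality.

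The hard part will be showing that the resulting rational curve in moduli is non-constant, that is, that the deformation $\mathcal M_t$ genuinely varies the isomorphism class of $(C_t,K_{C_t}-\sO_{C_t}(1))$ and is not absorbed by automorphisms or reparametrizations of a fixed abstract curve. I would establish this by a first-order computation — checking that the Kodaira--Spencer map of the family $\{C_t\}$ is nonzero at $t=0$ — most safely carried out on one explicit matrix factorization over a finite field and then spread out by semicontinuity, together with the routine verifications supplied by the correspondence: that the generic $C_t$ is smooth and irreducible of the expected genus and degree, and that $K_{C_t}-\sO_{C_t}(1)$ indeed defines a point of $\sW^1_{13,9}$.
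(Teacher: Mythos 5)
Your reduction to $\sW^4_{13,15}$ via Serre duality is correct and matches the paper, but the core of your argument has a genuine gap: you assert that the matrix factorizations of the relevant shape on $X$ form ``a rational variety --- an open subset of a space of matrices with entries in prescribed graded pieces, cut out by the factorization equations,'' and you use this to draw a rational curve $t\mapsto\mathcal M_t$ through $\mathcal M_0$. This is unjustified. The factorization equations $\psi\circ\varphi=f\cdot\id$ are \emph{quadratic} in the entries of the pair, so the locus they cut out is a closed subvariety of the space of matrix pairs, not an open subset of an affine space; there is no a priori reason it is rational, uniruled, or even irreducible --- indeed the paper explicitly poses the irreducibility of the space of matrix factorizations of shape $\begin{smallmatrix}15&2\\2&15\end{smallmatrix}$ on a general cubic as an open question, and its unirationality theorem for $\sH_{12,8}$ requires an elaborate construction of families of matrix factorizations via auxiliary curves of genus $10$ and degree $13$ precisely because one cannot simply ``move in a line'' inside that space. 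Through a point of an arbitrary variety there need not pass any rational curve, so your one-parameter deformation $\mathcal M_t$ is exactly the kind of object whose existence would have to be proved, and proving it is not cheaper than the problem you started with.

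The paper's actual mechanism is different and sidesteps this entirely: it \emph{fixes} the matrix factorization induced by the pair $(C,X)$, with $X$ a cubic, and exploits the failure of uniqueness in the Reconstruction Theorem when $s=3$. Building the monad requires choosing a rank-$2$ subbundle $\sO_X^2(-3)$ inside the kernel of the linear block $\delta$ of $\psi$; for $(g,d)=(13,15)$ this kernel is $3$-dimensional, so the choices form a $\GG(2,3)=\PP^2$, and each general choice reconstructs a smooth curve of genus $13$ and degree $15$ on the same $X$, with $C$ itself among the fibers. The resulting rational map $\PP^2\dashrightarrow\sW^4_{13,15}$ through $[C]$ supplies the ruling --- ironically, the choice your proposal treats as canonical (``applying the converse direction of the correspondence'') is the very source of the rational curve. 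Finally, non-constancy is verified in the paper not by a Kodaira--Spencer computation but by a degeneration trick: starting from a $g$-nodal rational curve $C'$ embedded by $|K_{C'}-D|$, the induced matrix factorization together with a random choice in $\PP^2$ yields a \emph{smooth} curve, so the map hits both a boundary point and an interior point of $\overline{\sW}{}^4_{15,13}$ and cannot be constant. Your first-order check at $t=0$ is a reasonable alternative in principle, but it is attached to the family $\mathcal M_t$ whose existence is the unproven step.
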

	
	The same method yields a proof of the uniruledness of $\sW^1_{12,8}$, already implied by the previous theorem, and of $\sW^1_{11,7}$ and $\sW^1_{10,6}$, already known to be unirational \cite{GeissUnirationality,GeissThesis}. 
	
	In Section \ref{unirationalHurwitzSpaces} we will formulate some speculations and questions about the range of unirational Hurwitz spaces, which partly motivates our study; we remark that the unirationality of $\sH_{12,8}$ and the uniruledness of $\sW^1_{13,9}$ fit perfectly into the picture.
	
	Matrix factorizations can be used constructively more in general. We present a way to construct unirational families of curves of genus $g \in [16,20]$; even though these families will be far from being dominant on $\sM_g$, such concrete examples offer the chance to prove some other results. For instance, we are able to prove the following
	
	\begin{theorem*}[Theorem \ref{generalQuartic}]
		A general cubic hypersurface in $\PP^4$ contains a family of dimension $2d$ of curves of genus $g$ and degree $d$ for
		\[
		(g,d) \in \{(12,14), (13,15)\}.
		\]
		A general quartic hypersurface in $\PP^4$ contains a $d$-dimensional family of curves of genus $g$ and degree $d$ for
		\[
		(g,d) \in \{(16,17), (17,18), (18,19), (19,20), (20,20)\}.
		\]
		\end{theorem*}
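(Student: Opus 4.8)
The plan is to realize each curve class as a determinantal (more precisely, Pfaffian or matrix-factorization) locus built from a matrix factorization on the fixed hypersurface $X$, exploiting the correspondence between pairs $(C,X)$ and matrix factorizations on $X$ that is the subject of this paper. For a fixed hypersurface $X$ of degree $e$ (here $e=3$ or $e=4$), I would first fix the numerical data $(g,d)$ and compute, via the theory developed earlier, the expected shape of a matrix factorization $(\varphi,\psi)$ on $X$ whose cokernel sheaf, suitably twisted, is (up to a line bundle) the ideal sheaf structure associated to a curve $C\subset X$ with the prescribed genus and degree. The key preliminary is a dimension count: one computes the dimension of the space of matrix factorizations of the relevant type on a \emph{general} $X$, subtracts the dimension of the automorphisms/equivalences acting on them (change of bases, twists), and compares with the dimension $2d$ (resp.\ $d$) claimed for the family of curves. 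The assertion that the resulting family has exactly dimension $2d$ (resp.\ $d$) should follow from the fact that the Hilbert scheme of such curves in $\PP^4$ has expected dimension $4d$ near $[C]$, combined with the codimension condition imposed by lying on a fixed hypersurface of degree $e$.

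The main steps would be, in order: \emph{(i)} For each pair $(g,d)$ in the two lists, produce an explicit matrix factorization $(\varphi,\psi)$ over the homogeneous coordinate ring of $X$ of the correct size and degree pattern, and verify, using the correspondence, that its associated cokernel gives the ideal sheaf $\sI_{C/X}$ (or $\sI_C$ in $\PP^4$) of a curve $C$ of genus $g$ and degree $d$. \emph{(ii)} Check that $C$ is smooth and irreducible for a general choice of data; since exhibiting even one smooth irreducible example suffices (by semicontinuity, smoothness and irreducibility are open conditions), this can be done by a single explicit computer-algebra computation in \Mac\ or \Sing\ over a finite field, invoking semicontinuity to lift to characteristic zero. \emph{(iii)} Compute the dimension of the family so obtained. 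Here one parametrizes the matrix factorizations, quotients by the equivalence relation, and shows the map to the Hilbert scheme is generically finite onto its image (or has controlled fibers), so that the image has the stated dimension. \emph{(iv)} Establish that the family dominates, or at least has the asserted dimension inside, the locus of curves on a \emph{general} $X$: this uses the openness of the locus of hypersurfaces for which the construction works together with an incidence-variety argument relating the space of pairs $(C,X)$ to the space of $X$'s.

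I expect the main obstacle to be step \emph{(iii)}, the dimension count, and specifically the verification that the parameter space of matrix factorizations maps onto the curve family with the expected fiber dimension. The subtle point is to identify precisely the equivalence group acting on matrix factorizations and to confirm that no unexpected coincidences inflate or deflate the count; one must show that the tangent space to the family of curves at a general $C$ matches the expected dimension $2d$ (resp.\ $d$), which amounts to a cohomological computation of $H^0(C,N_{C/X})$ (the normal bundle of $C$ in $X$) and a comparison with the deformations of the matrix factorization modulo equivalence. A clean way to carry this out is to verify the relevant $h^0$ and $h^1$ of normal bundles on the single explicit example, again using semicontinuity: if on one curve the normal bundle in $X$ has the expected cohomology, then the family has the expected dimension at a general point and the Hilbert scheme is smooth there. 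The genericity of $X$ is then handled by a standard upper-semicontinuity argument on the incidence variety $\{(C,X): C\subset X\}$ projecting to the space of hypersurfaces.
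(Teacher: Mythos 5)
Your closing paragraph lands on what is in fact the paper's entire proof: verify $\hh^1(\sN_{C/X})=0$ on a single explicit example (by computer over a finite field, lifted to characteristic zero by semicontinuity) and then run an incidence-variety dimension count between pairs $(C,X)$ and hypersurfaces. The long matrix-factorization detour of your steps \emph{(i)}--\emph{(iii)} --- parametrizing factorizations on a fixed general $X$, quotienting by the equivalence group, and matching deformations of the factorization against deformations of the curve --- is not used by the paper and, as you set it up, is circular: that a \emph{general} $X$ carries matrix factorizations yielding such curves is exactly what this theorem is supposed to establish, and indeed the paper's Theorem on constructing curves from matrix factorizations explicitly defers that point to the present theorem. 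The paper instead starts from curves already constructed on \emph{special} hypersurfaces (via auxiliary modules and the rational surfaces of Section 5) and spreads them out over the hypersurface space.

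Beyond that, there are two concrete gaps. First, you never identify the claimed dimensions $2d$ and $d$ with a cohomological quantity: the paper's Lemma computes $\chi(\sN_{C/X})=d(n+1-s)+(1-g)(n-4)$, hence $\chi(\sN_{C/X})=d(5-s)$ in $\PP^4$, which is $2d$ for $s=3$ and $d$ for $s=4$; combined with $\hh^1(\sN_{C/X})=0$ this gives $\hh^0(\sN_{C/X})=d(5-s)$ and smoothness of the Hilbert scheme of curves on $X$ at $[C]$. Your assertion that the Hilbert scheme in $\PP^4$ has expected dimension $4d$ near $[C]$ is incorrect ($\chi(\sN_{C/\PP^4})=5d-g+1$; the number $4d$ is the $\PP^3$ count). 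Second --- and this is the heart of the ``general hypersurface'' claim --- openness plus upper semicontinuity does \emph{not} yield dominance of the projection $\{(C,X)\}\to|\sO_{\PP^4}(s)|$; you must actually compute. The paper shows
\[
\dim \sM_g + \rho(g,4,d) + \bigl(\hh^0(\sI_C(s))-1\bigr) - \hh^0(\sN_{C/X}) \;=\; \binom{4+s}{4}-25,
\]
which equals $\dim|\sO_{\PP^4}(s)|-\dim\PGL(5)$, so the incidence variety dominates the space of degree-$s$ hypersurfaces and a general $X$ carries a family of dimension exactly $\hh^0(\sN_{C/X})=d(5-s)$. Note that this count also requires the expected value of $\hh^0(\sI_C(s))$ (a maximal-rank statement), a second open condition checked on the explicit examples; your proposal omits both verifications, and without the displayed equality your argument would only produce \emph{some} hypersurface containing the curves, not a general one.
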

	
The construction of our families of curves of genus $g \in [16,20]$ relies on considering particular rational surfaces arising when trying to adapt our technique to these specific cases. Other instances of results which can be proved by looking at specific examples concern the structure of the syzygies of general curves of particular genera and degrees, as mentioned in Theorem \ref{constructionThm}.

In the paper, we will often need to exhibit a concrete example to prove that some open conditions are generally satisfied. Our explicit constructions are performed by means of the software \Mac \cite{M2} and run best over a finite field. Semicontinuity arguments will ensure the existence of suitable examples over the rational or the complex field as well, as explained in Remark \ref{posCharSuffices}. For the supporting documentation regarding the computational proofs contained in this paper, we will always refer to \cite{SchreyerTanturriCode}. \newline

The paper is structured as follows: in Section \ref{unirationalHurwitzSpaces} we survey the known results about the unirationality of Hurwitz spaces and we present some questions and speculations about what kind of general behavior can be expected. In Section \ref{matrixFact} we recall some basic definitions and general facts about matrix factorizations and we explain, starting with a motivating example, the correspondence between particular matrix factorizations and curves in $\PP^4$. 
The key point of the correspondence is the Reconstruction Theorem \ref{reconstructionThm}. In Section \ref{uniruledness} we prove Theorem \ref{constructionThm}, which gives us an effective method to produce curves in $\PP^4$ starting from suitable matrix factorizations; moreover, we use the previous correspondence to provide a cheap proof of the uniruledness of $\sW^1_{13,9}$ (Corollary \ref{unirulednessThm}). In Section \ref{uniratHurwitz} we prove our main result, Theorem \ref{unirationalityThm}; for this sake, we use particular matrix factorizations arising from suitable auxiliary curves of genus 10 and degree 13. Finally, in Section \ref{families} we construct unirational families of curves of genus $16 \leq g \leq 20$ lying on particular rational surfaces in $\PP^4$.

\begin{ack}
	The authors would like to thank the referee for valuable suggestions and remarks.
\end{ack}

\begin{notation*}
In the paper we will use \Mac notation for Betti tables. If a module $M$ has Betti numbers $\beta_{i,j}= \dim \Tor_i^R(M,{K})_j $ over a ring $R$ with base field ${K}$, its Betti table will be written as  \rule{0pt}{0pt}
\[
\begin{array}{c|ccccc}
& 	\rule{0.5ex}{0pt} 0 \rule{0.5ex}{0pt} &	\rule{0.5ex}{0pt} 1 \rule{0.5ex}{0pt} & \rule{0.5ex}{0pt} 2 \rule{0.5ex}{0pt} & \dotso \\ \hline
0& \beta_{0,0} & \beta_{1,1} &	\beta_{2,2} &\dotso \\
1&	\beta_{0,1} & \beta_{1,2} & \beta_{2,3}	& \dotso\\
2& \beta_{0,2} & \beta_{1,3} & \beta_{2,4} & \dotso\\
\vdots& \vdots & \vdots &\vdots & \ddots
\end{array}
\] 
\end{notation*}


\section{Unirationality of Hurwitz spaces}\label{unirationalHurwitzSpaces}

In this section we briefly survey what we know about the unirationality of the Hurwitz spaces $\sH_{g,d}$. To put the question into the right framework we recall a few facts from Brill--Noether theory.
 
 A general curve $C$ of genus $g$ has a linear system $g^r_d$ of dimension $r$ of
divisors of degree $d$ if and only if  the Brill--Noether number
$$ \rho=\rho(g,r,d)=g-(r+1)(g+r-d)$$
is non-negative.
Moreover, in this case, the Brill--Noether scheme
$$W^r_d(C)=\{ L \in \Pic^d(C) \mid \hh^0(L) \ge r+1 \}$$
 has dimension $\rho$. 
Recall some notation from \cite{ACGH}:
$$
\sM^r_{g,d} = \{ C \in \sM_g \mid \exists L \in W^r_d(C) \}, \;
$$
$$
\sW^r_{g,d} = \{ (C,L) \mid C \in \sM^r_{g,d}, L \in W^r_d(C) \},
$$
$$
\sG^r_{g,d} = \{ (C,L,V) \mid (C,L) \in \sW^r_{g,d}, V \subset \HH^0(L), \dim V =r+1 \}.
$$
Thus we have natural morphisms
$$
\xymatrix{ 
\sH_{g,d} \ar[r]^\alpha & \sG^1_{g,d} \ar[r]^\beta & \sW^1_{g,d} \ar[r]^\gamma & \sM^1_{g,d}; \cr}
$$
with our notation, $\alpha$ is a $\PGL(2)$-bundle over the base point free locus, with fibers corresponding to the choices of a basis of $V$, the fibers of $\beta$ are Grassmannians $\GG(2,\HH^0(C,L))$, and the fibers of $\gamma$ are the $W^1_d(C)$. Thus the unirationality of $\sH_{g,d}$ is equivalent to the unirationality of $\sW^1_{g,d}$.

The unirationality of $\sH_{g,d}$ for $2 \le d \le 5$ and arbitrary $g\ge 2$ has been known for a long time. The case $d=5$ is due to Petri \cite{Petri}, with clarification given by the Buchsbaum--Eisenbud structure Theorem \cite{BuchsbaumEisenbud,SchreyerSyzygies}, and independently to B.~Segre \cite{Segre}, with clarification by Arbarello and Cornalba \cite{ArbarelloCornalba}.

The case for $g \le 9$ is due to Mukai:

\begin{theorem}[Mukai \cite{Mukai}]
A general canonical curve $C$ of genus $g=7,8,9$ arises as transversal intersection of a linear space with a homogeneous variety: 
 \rule{0pt}{0pt}

\begin{center}
\begin{tabular}{ccc}
\midrule
\rule{5pt}{0pt}$7$\rule{5pt}{0pt} & \rule{5pt}{0pt}$C = \PP^{6} \cap {\rm Spinor}^{10} \subset \PP^{15}$\rule{5pt}{0pt} & \rule{5pt}{0pt}isotropic subspaces of $Q^8 \subset \PP^9$\rule{5pt}{0pt} \\
$8$ & $C = \PP^{7} \cap \GG(2,6)^8 \subset \PP^{14}$ & Grassmannian of lines in $\PP^5$ \\
$9$ & $C = \PP^{8} \cap \LL(3,6)^6 \subset \PP^{13}$ & Lagrangian subspaces of $(\CC^6,\omega)$ \\
\midrule
\end{tabular}
\end{center}

\end{theorem}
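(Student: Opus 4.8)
The statement splits into a forward direction — the displayed linear sections really are canonically embedded curves of the stated genus — and a reverse direction, that a \emph{general} such curve arises this way; I would prove them separately, the second being where essentially all the difficulty lies. Write $\Sigma_g\subset\PP^N$ for the relevant homogeneous variety (the spinor tenfold ${\rm Spinor}^{10}\subset\PP^{15}$ for $g=7$, the Grassmannian $\GG(2,6)\subset\PP^{14}$ for $g=8$, the Lagrangian Grassmannian $\LL(3,6)\subset\PP^{13}$ for $g=9$). Each $\Sigma_g$ is a Fano variety of coindex $3$, that is $-K_{\Sigma_g}=(\dim\Sigma_g-2)\,H$ with $H$ the hyperplane class, and is projectively normal in its minimal embedding. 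For the forward direction I would cut $\Sigma_g$ by $c=\dim\Sigma_g-1$ general hyperplanes: Bertini gives a smooth irreducible curve $C=\Sigma_g\cap\PP^{g-1}$, and adjunction yields $K_C=(K_{\Sigma_g}+cH)|_C=H|_C$, so $\mathcal{O}_C(1)=K_C$ and $C$ is canonically embedded; projective normality shows $C$ is nondegenerate in $\PP^{g-1}$ and a Riemann--Roch/Hilbert-polynomial computation pins the genus at $g$.

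For the reverse direction I would follow Mukai and route the problem through K3 surfaces. The first step is the known fact that for $g\le 9$ the general curve $C$ of genus $g$ lies on a polarized K3 surface $S$ with $C\in|H|$; equivalently the universal family over the $19$-dimensional moduli space of genus-$g$ K3 surfaces dominates $\sM_g$ in this range. It then suffices to realise $S$ itself as a linear section $\Sigma_g\cap\PP^{g}$ with $\mathcal{O}_S(1)=H$, for then the hyperplane section $C\in|H|$ becomes $C=S\cap\PP^{g-1}=\Sigma_g\cap\PP^{g-1}$, exactly a linear section of the required type.

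The engine realising $S\hookrightarrow\Sigma_g$ is a distinguished vector bundle. On $S$ I would construct the rigid (spherical) bundle $\mathcal{E}$ whose Mukai vector $v=(r,H,s)$ satisfies $\langle v,v\rangle=H^2-2rs=-2$, i.e.\ $rs=g$, the solution relevant to $\Sigma_g$ being rank $2$, $s=4$ for $g=8$ and rank $3$, $s=3$ for $g=9$; in the latter case $\mathcal{E}$ additionally carries a structure making $H^0(\mathcal{E})$ a symplectic $6$-space, which cuts $\GG(3,6)$ down to $\LL(3,6)$, while for $g=7$ one takes instead the restriction of the tautological rank-$5$ maximal-isotropic subbundle of the spinor variety with its quadratic form. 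Sphericity makes the moduli of $\mathcal{E}$ a reduced point, giving existence and uniqueness, and Mukai's theory of bundles on K3 surfaces provides global generation together with the vanishings $h^1(\mathcal{E})=h^2(\mathcal{E})=0$, so that $h^0(\mathcal{E})=\chi(\mathcal{E})$ equals the dimension of the vector space underlying $\Sigma_g$ ($6$ for $g=8,9$ and $10$ for $g=7$). Evaluating sections, $p\mapsto H^0(\mathcal{E}(-p))$, refined by the symplectic or orthogonal form, then maps $S$ to $\Sigma_g$, and one checks $\phi^*\mathcal{O}(1)=H$ and that $\phi$ is a closed embedding onto $\Sigma_g\cap\PP^g$. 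Restricting $\mathcal{E}$ to $C$ recovers the intrinsic Brill--Noether description of the Mukai bundle of $C$, closing the loop.

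The main obstacle is precisely this bundle analysis on the K3 surface: proving existence, uniqueness and, above all, global generation of the spherical bundle $\mathcal{E}$ with its prescribed symplectic or orthogonal structure, and then verifying that the induced morphism is an isomorphism of $S$ onto a \emph{transversal} linear section $\Sigma_g\cap\PP^g$ rather than merely an inclusion. I expect the transversality and the identification of the image with the full linear section — equivalently, the surjectivity of the relevant multiplication maps on the image — to require the most care. A final parameter count, comparing $\dim\sM_g=3g-3$ with the dimension of the family of linear sections of $\Sigma_g$ modulo $\Aut(\Sigma_g)$, confirms that the construction dominates $\sM_g$, and hence that the general curve is obtained in this way.
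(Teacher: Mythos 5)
The paper does not prove this statement: it is quoted from Mukai \cite{Mukai} as a known theorem, so there is no internal argument to compare yours against. Your sketch faithfully reproduces the strategy of the cited source --- Bertini plus adjunction on the coindex-$3$ homogeneous varieties for the forward direction (with $-K_{\Sigma_g}=(\dim\Sigma_g-2)H$, so cutting by $\dim\Sigma_g-1$ hyperplanes gives $K_C=H|_C$, and the degrees $12,14,16$ match $2g-2$), and, for the converse, the passage through a K3 surface $S$ with $C\in|H|$ (using Mukai's result that the K3 locus dominates $\sM_g$ for $g\le 9$) followed by the rigid Mukai bundle embedding $S$, and hence $C$, as a transversal linear section --- so in the terms of this review it is the same approach as the (external) proof. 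Two remarks: first, for $g=7$ the rank-$5$ orthogonal bundle has $c_1=2H$ and Mukai vector $(5,2H,5)$ (then $\langle v,v\rangle = 4H^2-50=-2$), so the arithmetic $H^2-2rs=-2$, i.e.\ $rs=g$ with $v=(r,H,s)$, genuinely applies only to $g=8,9$; your decision to treat $g=7$ separately via the tautological isotropic bundle is therefore necessary, not cosmetic. Second, the heavy lifting --- existence, uniqueness and global generation of the rigid bundle, and the verification that the evaluation morphism embeds $S$ onto a transversal linear section with surjective multiplication maps --- is exactly what you defer; your text is thus an accurate road map of Mukai's proof rather than a self-contained one, which is the appropriate standard for a statement the paper itself only cites.
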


Structure results for canonical curves of genus $g \le 6$ are classical, see, e.g., \cite{SchreyerSyzygies}.

\begin{corollary}
 The moduli spaces $\sM_{g,g}$ of $g$-pointed curves of genus $g$ and the universal Picard varieties $\Pic^d_g$  are unirational for $g\le 9$ and any $d$. The spaces
$ \sM^1_{g,d}$ and $\sH_{g,d}$ are unirational for $g \le 9$ and $d \ge g$.
\end{corollary}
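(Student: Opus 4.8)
The plan is to parametrize each of these spaces by a rational variety, using the linear-section description of general canonical curves from Mukai's theorem (and, for $g\le 6$, from the classical structure results cited above). Throughout, write $C=\Lambda\cap Y$ for the canonical model of a general curve of genus $g$, where $Y\subset\PP^N$ is the relevant rational homogeneous variety and $\Lambda=\PP^{g-1}=\langle C\rangle$.

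For $\sM_{g,g}$ I would consider the map $Y^g\to\sM_{g,g}$ sending $(p_1,\ldots,p_g)$ to $(\langle p_1,\ldots,p_g\rangle\cap Y,\,p_1,\ldots,p_g)$. Since $Y$ is rational, so is $Y^g$, and this map is dominant: a general curve is $\Lambda\cap Y$ by Mukai, its canonical model is spanned by $g$ general points on it (general position), and those points automatically lie on $\Lambda\cap Y$ because they lie on both $\Lambda$ and $Y$. Hence $\sM_{g,g}$ is unirational, and a fortiori so is $\sM_g$ (forget the points). Since $d\ge g$ exceeds the maximal gonality $\lceil(g+2)/2\rceil$, every curve of genus $g$ carries a $g^1_d$, so $\sM^1_{g,d}=\sM_g$ is unirational as well.

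For $\Pic^d_g$ the same parameter space carries $g$ points on $C$, and I would use the Abel--Jacobi assignment $(p_1,\ldots,p_g)\mapsto(C,\sO_C(\sum_i a_i p_i))$ for fixed integers $a_i$ with $\sum_i a_i=d$ and all $a_i\neq 0$ (possible since $g\ge 2$). Over a fixed general $C$ the fibre of $Y^g\to\sM_g$ is dense in $C^g$, and the induced map $C^g\to\Pic^d(C)$ has surjective differential: the tangent vectors to $C$ at $g$ general points are independent in the tangent space of $\Pic^d(C)$, which is the infinitesimal form of the spanning statement used above. Hence the map is dominant, and $\Pic^d_g$ is unirational for every $d$.

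For $\sH_{g,d}$ with $d\ge g$ it suffices, by the discussion of $\alpha,\beta,\gamma$, to show that $\sW^1_{g,d}$ is unirational. When $d\ge g+1$, Riemann--Roch gives $\hh^0(L)\ge d-g+1\ge 2$ for every $L$ of degree $d$, so $\sW^1_{g,d}=\Pic^d_g$ and we conclude by the previous step. The boundary case $d=g$ is the one real obstacle, since here $\sW^1_{g,g}\subsetneq\Pic^g_g$ is a proper locus. I would treat it by Serre duality: $L\mapsto K_C-L$ identifies $\sW^1_{g,g}$ with the relative variety $\sW^0_{g,g-2}$ of effective line bundles of degree $g-2$, which in turn is dominated by the incidence variety of tuples $(\Lambda,p_1,\ldots,p_{g-2})$ with $p_i\in\Lambda\cap Y$, via $(\Lambda,p_1,\ldots,p_{g-2})\mapsto(C,K_C-\sO_C(\sum_i p_i))$. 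This incidence variety fibres over $Y^{g-2}$ with fibre the Grassmannian of subspaces $\Lambda$ containing the span of the $p_i$ — a genuine Grassmann bundle because $g-2<g$, so that span is a $\PP^{g-3}$ of constant dimension — and is therefore rational; the map is dominant because a general effective divisor of degree $g-2<g$ is a general tuple of points determining its class uniquely. This yields the unirationality of $\sW^1_{g,g}$ and completes the list. The points demanding care are precisely these dominance and genericity claims together with the Grassmann-bundle structure over $Y^{g-2}$.
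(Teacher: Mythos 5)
Your proposal is correct and takes essentially the same route as the paper: parametrize by $g$ general points on Mukai's homogeneous variety (classical models for $g\le 6$), take the linear section through their span to obtain $(C,p_1,\ldots,p_g)$, set $L=\sO_C(\sum_i a_ip_i)$ for $\Pic^d_g$, use Riemann--Roch for $d\ge g+1$, and residuate $g-2$ general points in the boundary case $d=g$. The only divergences are cosmetic: the paper treats $d=g$ more economically by taking $L=\omega_C(-\sum_{j=1}^{g-2}p_j)$ directly with the already-chosen marked points, so no separate Grassmann bundle over $Y^{g-2}$ is needed, and your Abel--Jacobi differential computation merely makes explicit a dominance claim the paper leaves implicit.
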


\begin{proof} The argument is the same as in \cite[\textsection 1]{VerraUnirationality}. We can choose $g$ general points $p_1,\ldots,p_g$ in the homogeneous variety and can take $\PP^{g-1}$ as their span. Then the intersection of the homogeneous variety with this $\PP^{g-1}$ gives a smooth curve $C$ of genus $g$
together with $g$ marked points. For the line bundle, we may take $L= \sO_C(\sum_{j=1}^g d_j p_j)$ for integers $d_1, \ldots, d_g$ with $\sum_{j=1}^g d_j =d$.

As for the unirationality of $\sM^1_{g,d}$ for $d\ge g+1$, with $L$ as above we have $\hh^0(C,L) \ge 2$. In case $d=g$, we take $L=\omega_C(-\sum_{j=1}^{g-2} p_j)$, which is a line bundle  $L \in W^1_g(C) \setminus W^2_g(C)$ by 
Riemann--Roch
. The unirationality of $\sH_{g,d}$ then follows. \qedhere
\end{proof}

In the range $d \le 5$ or $g\le 9$, apart from a few cases due to Florian Gei\ss\phantom{ }\cite{GeissThesis}, only the unirationality of $\sH_{9,8}$ needed to be proved. This has recently been established in \cite{DamadiSchreyer}.

\begin{figure}
\begin{scriptsize}
\begin{tabular}{|c|cccc|ccccccccc}
{45}&\bcell&\bcell&\bcell&\bcell{\color{blue}P} &\bcell{\color{blue}G}&\rcell&\rcell&\rcell&\rcell&\rcell&\rcell&\rcell&\rcell \cr
$\mid$&\bcell&\bcell&\bcell& \bcell{\color{blue}$\mid$} &&\rcell&\rcell&\rcell&\rcell&\rcell&\rcell&\rcell&\rcell\\
$\mid$&\bcell&\bcell&\bcell& \bcell{\color{blue}$\mid$} &&\rcell&\rcell&\rcell&\rcell&\rcell&\rcell&\rcell&\rcell\\
{40}&\bcell&\bcell&\bcell&\bcell{\color{blue}P} &\bcell{\color{blue}G}&\rcell&\rcell&\rcell&\rcell&\rcell&\rcell&\rcell&\rcell \\
$\mid$&\bcell&\bcell&\bcell& \bcell{\color{blue}$\mid$} &&\rcell&\rcell&\rcell&\rcell&\rcell&\rcell&\rcell&\rcell\\
$\mid$&\bcell&\bcell&\bcell& \bcell{\color{blue}$\mid$} &&\rcell&\rcell&\rcell&\rcell&\rcell&\rcell&\rcell&\rcell\\
{36}&\bcell&\bcell&\bcell&\bcell{\color{blue}P} &\bcell{\color{blue}G}&\rcell&\rcell&\rcell&\rcell&\rcell&\rcell&\rcell&\rcell \\
{35}&\bcell&\bcell&\bcell&\bcell{\color{blue}P} &\bcell{\color{blue}G}&\rcell&\rcell&\rcell&\rcell&\rcell&\rcell&\rcell&\rcell\\
34&\bcell&\bcell&\bcell& \bcell{\color{blue}P} &&\rcell&\rcell&\rcell&\rcell&\rcell&\rcell&\rcell&\rcell\\
{33}&\bcell&\bcell&\bcell&\bcell{\color{blue}P} &\bcell{\color{blue}G}&\rcell&\rcell&\rcell&\rcell&\rcell&\rcell&\rcell&\rcell\\
32&\bcell&\bcell&\bcell&\bcell  {\color{blue}P}&&\rcell&\rcell&\rcell&\rcell&\rcell&\rcell&\rcell &\rcell \\
{31}&\bcell&\bcell&\bcell&\bcell {\color{blue}P}&\bcell{\color{blue}G}&\rcell &\rcell&\rcell&\rcell&\rcell&\rcell&\rcell&\rcell\\
{30}&\bcell&\bcell&\bcell&\bcell{\color{blue}P} &\bcell{\color{blue}G}&\rcell&\rcell&\rcell&\rcell&\rcell&\rcell&\rcell&\rcell\\
29&\bcell&\bcell&\bcell&\bcell  {\color{blue}P}&&\rcell&\rcell&\rcell&\rcell&\rcell&\rcell&\rcell &\rcell \\
{28}&\bcell&\bcell&\bcell&\bcell{\color{blue}P} &\bcell{\color{blue}G}&\rcell&\rcell&\rcell&\rcell&\rcell&\rcell&\rcell&\rcell\\
27 &\bcell&\bcell&\bcell&\bcell {\color{blue}P}&\bcell{\color{blue}G}&&\rcell&\rcell&\rcell&\rcell&\rcell&\rcell&\rcell\\
{26}&\bcell&\bcell&\bcell&\bcell{\color{blue}P} &\bcell{\color{blue}G} &&\rcell&\rcell&\rcell&\rcell&\rcell&\rcell&\rcell{\color{red}$  {\hbox{EH}}$} \\
{25}&\bcell&\bcell&\bcell&\bcell{\color{blue}P} &\bcell{\color{blue}G} &&\rcell&\rcell&\rcell&\rcell&\rcell&\rcell&\rcell{\color{red}$  {\hbox{HM}}$} \\
{24}&\bcell&\bcell&\bcell&\bcell{\color{blue}P} &\bcell{\color{blue}G} &&\rcell&\rcell&\rcell&\rcell&\rcell&\rcell{\color{red}$  {\hbox{EH}}$}&\rcell{\color{red}$  {\hbox{EH}}$} \\
{23}&\bcell&\bcell&\bcell&\bcell{\color{blue}P} &\bcell{\color{blue}G}&&\rcell&\rcell&\rcell&\rcell&\rcell&\rcell{\color{red}HM}&\rcell{\color{red}HM}\\
{22}&\bcell&\bcell&\bcell&\bcell {\color{blue}P}&\bcell {\color{blue}G}&&\rcell&\rcell&\rcell&\rcell&\rcell{\color{red}$  {\;\hbox{F}\;}$}  &\rcell{\color{red}$  {\;\hbox{F}\;}$}&\rcell{\color{red}$  {\;\hbox{F}\;}$}\\ \hline
21&\bcell&\bcell&\bcell&\bcell {\color{blue}P}&\bcell {\color{blue}G}&&\rcell&\rcell&\rcell&\rcell&\rcell{\color{black}} &\rcell&\rcell\\
$\mid$ &\bcell&\bcell&\bcell&\bcell{\color{blue}$\mid$}&\bcell{\color{blue}$\mid$}&&\rcell&\rcell&\rcell&\rcell&\rcell&\rcell&\rcell\\
$\mid$ &\bcell&\bcell&\bcell&\bcell{\color{blue}$\mid$}&\bcell{\color{blue}$\mid$}&&\rcell&\rcell&\rcell&\rcell&\rcell&\rcell&\rcell\\
$\mid$ &\bcell&\bcell&\bcell&\bcell{\color{blue}$\mid$}&\bcell{\color{blue}$\mid$}&&\rcell&\rcell&\rcell&\rcell&\rcell&\rcell&\rcell\\
16&\bcell&\bcell&\bcell&\bcell{\color{blue}P}&\bcell{\color{blue}G}&&\rcell&\rcell$  { \; \;}$&\rcell&\rcell&\rcell&\rcell&\rcell\\
15&\bcell&\bcell&\bcell&\bcell{\color{blue}P}&\bcell{\color{blue}G}&&\rcell&\rcell{\color{violet}V}&\rcell&\rcell&\rcell&\rcell&\rcell\\
14&\bcell&\bcell&\bcell&\bcell{\color{blue}P}&\bcell{\color{blue}G}  & &\bcell{\color{blue}$  {\; \hbox{V} \;}$} &\rcell&\rcell&\rcell&\rcell&\rcell&\rcell{\color{red} FV}\\
13&\bcell&\bcell&\bcell&\bcell{\color{blue}P}&{\bcell\color{blue}G}&\bcell {\color{blue}KT}&&\rcell{\color{violet}ST}&\rcell&\rcell&\rcell  &\rcell{\color{red} FV}&\rcell{\color{red} CKV}\\
12&\bcell&\bcell&\bcell&\bcell{\color{blue}P}&\bcell{\color{blue}G}&\bcell{\color{blue}$  { \,\hbox{G} \,}$}&\bcell{\color{blue}ST}&\bcell{\color{blue}S}&\rcell&\rcell&\rcell{\color{red} FV}&\rcell{\color{red} CKV}  &\rcell{\color{red} CKV}\\
11&\bcell&\bcell&\bcell&\bcell{\color{blue}P}&\bcell{\color{blue}G}&\bcell{\color{blue}G}&\bcell{\color{blue}CR}&&\rcell&\rcell{\color{violet} FV}&\rcell{\color{red} CKV}&\rcell  {\color{red} CKV}&\rcell{\color{red} CKV}\\
10&\bcell&\bcell&\bcell&\bcell{\color{blue}P}&\bcell{\color{blue}G}&\bcell{\color{blue}G}&\bcell{\color{blue}KT}&&{\color{violet} FV}&\rcell{\color{red} CKV} &\rcell{\color{red} CKV}  &\rcell{\color{red} CKV}&\rcell{\color{red} BFV}\\
\hline
{\bf \color{blue}9}&\bcell&\bcell&\bcell&\bcell{\color{blue}P}&\bcell{\color{blue}G} &\bcell{\color{blue}G}&\bcell{\color{blue}DS}&\bcell{\color{blue}M}&\bcell
{\color{blue}M}&\bcell{\color{blue}M}&\bcell{\color{blue}M}&\bcell{\color{blue}M}&\bcell{\color{blue}M} \\
{\bf \color{blue}8}&\bcell&\bcell&\bcell&\bcell{\color{blue}P}&\bcell{\color{blue}$\mid$} &\bcell {\color{blue}G}&\bcell{\color{blue}M}&\bcell{\color{blue}M}&\bcell{\color{blue}M}&\bcell{\color{blue}M}&\bcell {\color{blue}M}&{\bcell\color{blue}M}&\bcell{\color{blue}M}\\
{\bf \color{blue}7}&\bcell &\bcell&\bcell &\bcell{\color{blue}P}&\bcell{\color{blue}$\mid$}  &\bcell{\color{blue}M}&
\bcell{\color{blue}M} & \bcell{\color{blue}M}  &\bcell{\color{blue}M}& \bcell{\color{blue}M}  &\bcell{\color{blue}M}&\bcell{\color{blue}M}  &\bcell{\color{blue}M} \\ \hline
{\bf \color{blue}6}&\bcell {\color{blue}}& \bcell {\color{blue}}&\bcell{\color{blue}} &\bcell{\color{blue}$\,$}&\bcell{\color{blue}$\,$}  &\bcell
{\color{blue}}&\bcell  &\bcell&\bcell&\bcell&\bcell&\bcell&\bcell \\
{\bf \color{blue}$\mid$}&\bcell {\color{blue}$\;$}&\bcell &\bcell&\bcell&\bcell&\bcell&\bcell&\bcell&\bcell&\bcell&\bcell&\bcell  &\bcell   \\
{\bf \color{blue}1}&\bcell {\color{blue}$\;$}&\bcell &\bcell&\bcell&\bcell&\bcell&\bcell&\bcell&\bcell&\bcell&\bcell&\bcell &\bcell    \\  \hline  
$ g\;\slash\; d$& {\bf \color{blue}2}& {\bf \color{blue}3} & {\bf \color{blue}4} & {\bf \color{blue}5} & 6 & 7 & 8 & 9 & 10 & 11 & 12 & 13 & 14  \\ \hline
\end{tabular}
\end{scriptsize}

\caption{
 Color coding indicates where $\sW^1_{g, d}$ is known to be  {\color{blue}unirational}, {\color{violet} uniruled}
or {\color{red} not unirational}. \label{Fig1}
Results are due to {\color{blue}M}ukai ($g \le 9$), {\color{blue}P}etri or B.~Segre ($d=5$) \cite{Mukai, Petri, Segre}, 
{\color{red}E}isenbud, {\color{red}H}arris, {\color{red}M}umford, {\color{red}F}arkas, {\color{red}B}ini, {\color{red}C}asalaina-Martin, {\color{red}K}ass, {\color{red}F}ontanari and {\color{red}V}iviani \cite{BiniFontanariViviani, CasalainaKassViviani, EisenbudHarrisKodaira, FarkasGeometry, FarkasBirational, FarkasVerra, HarrisMumfordKodaira}, {\color{blue}C}hang and {\color{blue}R}an, {\color{blue}V}erra, {\color{blue}G}ei\ss , {\color{blue}D}amadi and {\color{blue}S}chreyer, {\color{blue}S}chreyer and  {\color{blue}T}anturri, {\color{blue}K}eneshlou and  {\color{blue}T}anturri \cite{ChangRanUnirationality, ChangRanKodaira, ChangRanSlope, DamadiSchreyer, GeissThesis, GeissUnirationality, KeneshlouTanturri, SchreyerComputer, VerraUnirationality}.
 }
    \end{figure}

Outside the range $d\le 5$ or $g\le 9$ there are only finitely many pairs $(g,d)$ for which $\sH_{g,d}$ is known to be unirational. 

\begin{question} Are there only  finitely many pairs $(g,d)$ with $g\ge 10$ and $d\ge 6$ such that $\sH_{g,d}$
is unirational?  
\end{question}

In particular, we may ask

\begin{question} Are the genera $g$ such that $\sH_{g,6}$ is unirational bounded?
\end{question}

Florian Gei\ss \ \cite{GeissUnirationality} proved the unirationality of $\sH_{g,6}$ for the values $g \in \{9,\ldots,28,30,31,33,35,36,40,45\}$ using models of curves in $\PP^1\times \PP^2$ of bidegree $(6,d_2)$ and liaison, $d_2=d_2(g)$ being the minimal number such that $\rho(g,2,d_2) \ge 0$. His proof actually shows the unirationality of a covering space of $\sW^1_{g,6}$.

\begin{question} Are the genera $g$ such that $\sH_{g,7}$ is unirational bounded?
\end{question}

\begin{question} Is $g=14$ the largest genus  such that $\sH_{g,8}$ is unirational? In other words, is Verra's case \cite{VerraUnirationality}
extremal? Is $g=12$ the largest genus  such that $\sH_{g,9}$ is unirational?
\end{question}

If all these questions have an affirmative answer, then the range of pairs $(g,d)$ such that $\sW^1_{g,d}$ and $\sH_{g,d}$ are not unirational
has roughly shape as indicate in Figure \ref{Fig1} with the color red.

\section{Matrix factorizations and the Reconstruction Theorem}
\label{matrixFact}

\subsection{Matrix factorizations}	
	
Matrix factorizations were introduced by Eisenbud in his seminal paper \cite{EisenbudHomological}. We recall here some basic facts and properties for matrix factorizations over the special case of a polynomial ring $S=K[x_0,\ldots,x_n]$, which is the case of interest for the paper. Any module will be assumed to be finitely generated.

Let $f \in S$ be a nonzero homogeneous form of degree $s$. A \emph{matrix factorization} of $f$ (or on the hypersurface $\Vi(f)$) is a pair $(\varphi, \psi)$ of maps
\[
\varphi: G \to F, \quad \qquad \psi: F \to G(s),
\]
where $F=\bigoplus_{\ell=1}^r S(-a_\ell)$ and $G=\bigoplus_{\ell=1}^{r'} S(-b_\ell)$ are free $S$-modules, satisfying $\psi\circ \varphi = f \cdot \id_G$ and $\varphi(s)\circ \psi = f \cdot \id_F$. This condition forces the two matrices representing the maps to be square, i.e., $r=r'$.

If $(\varphi,\psi)$ is a matrix factorization, then $\coker \varphi$ is a maximal Cohen--Macaulay module (MCM for short) on the hypersurface ring $S/f$. Conversely, a finitely generated MCM $S/f$-module $M$ has a minimal free resolution over $S$
\[
0 \longleftarrow M \longleftarrow F\longleftarrow  G \longleftarrow 0;
\]
multiplication by $f$ on this complex is null homotopic
\[
\xymatrix{ 
	0 & \ar[l] M \ar[d]_0&  \ar[l] F \ar[d]_f \ar@{.>}[dr]^-{\exists\psi}& \ar[l]_\varphi G \ar[d]^f &  \ar[l] 0 \\
	0 & \ar[l] M(s) &  \ar[l] F(s)  & \ar[l]^{\varphi(s)} G(s) &  \ar[l] 0 \\
} 
\]
and yields therefore a matrix factorization $(\varphi,\psi)$. As an $S/f$-module, $M$ has the infinite 2-periodic resolution
\[
\xymatrix{
	0 &\ar[l] M & \ar[l] \overline F&  \ar[l]_{\overline \varphi}  \overline G & \ar[l]_-{\overline{\psi}(-s)} \overline F(-s) &\ar[l]_-{\overline{\varphi}(-s)}  \overline G(-s) & \ar[l]_-{\overline{\psi}(-2s)} \ldots \\
}
\]
where $\overline F=F \tensor S/f$ and $\overline G=G \tensor S/f$. In particular, this sequence is exact, and the dual sequence corresponding to the transposed matrix factorization
$(\psi^t,\varphi^t)$ is exact as well.

If $N$ is an arbitrary $S/f$ module, then any minimal free resolution becomes eventually 2-periodic. If
$$
0 \longleftarrow N \longleftarrow F_0\longleftarrow  F_1 \longleftarrow \ldots \longleftarrow  F_c \longleftarrow 0
$$
is a minimal free resolution of $N$ of length $c$ as an $S$-module, then the Shamash construction \cite{Shamash} produces a (non-necessarily minimal) free resolution of $N$ of the form
\[
0 \leftarrow N \leftarrow \overline F_0\leftarrow  \overline F_1 \leftarrow  
\begin{array}{c}
\overline F_2 \\
\oplus\\
\overline{F_0}(-s)
\end{array}
\leftarrow 
\begin{array}{c}
\overline F_3 \\
\oplus\\
\overline{F_1}(-s)
\end{array}
\leftarrow 
\begin{array}{c}
\overline F_4 \\
\oplus\\
\overline{F_2}(-s)\\
\oplus\\
\overline{F_0}(-2s)
\end{array}
\leftarrow 
\ldots ,
\]
which becomes 2-periodic after the $(c-1)$-th step. This construction allows us to control to some extent the degrees of the entries of the corresponding minimal matrix factorization of $f$ induced by an $S/f$-module $N$, if we know the Betti numbers of $N$ as an $S$-module. The Shamash construction has the following peculiarity: at the $i$-th step
\begin{equation}
\label{shamashConstr}
\xymatrix{\displaystyle\bigoplus_{j\ge 0} \overline F_{i-1-2j}\left(-js \right) & \ar[l] \displaystyle\bigoplus_{j\ge 0} \overline F_{i-2j}(-js)
	}
\end{equation}
the components $\overline F_{i-1-2j}(-js) \leftarrow \overline F_{i-2j}(-js)$ are inherited from the maps $F_{i-1-2j} \leftarrow F_{i-2j}$ in the  resolution of $N$ over $S$ for any $j$, while the component
\begin{equation}
\label{shamashzero}
\xymatrix{\displaystyle\bigoplus_{j\geq 1} \overline F_{i-1-2j}\left(-js \right) & \ar[l] \overline F_{i}
	} \quad \mbox{ is the zero map}.
\end{equation}

\subsection{Curves and matrix factorizations}

An easy way to produce matrix factorizations on a hypersurface $X=\Vi(f)$ in $\PP^4$ is to consider a module $N$ over $S=\KK[x_0,\dotsc,x_4]$ annihilated by $f$. A matrix factorization of $f$ is given by the periodic part of a minimal free resolution of $N$ as a module over $S_X:=S/f$. 

Our motivating example will be a general curve of genus $12$ and degree $14$ in $\PP^4$. 

\begin{proposition}
	\label{expected1412}
	Let $C$ be a general linearly normal non-degenerate curve of genus $12$ and degree $14$ in $\PP^4$. Then $C$ is of maximal rank, and the homogeneous coordinate ring $S_C=S/I_C$ and the section ring $\Gamma_*(\sO_C):=\oplus_{n \in \ZZ}\HH^0(\sO_C(n))$ have minimal free resolutions with the following Betti tables:
	\begin{equation}
	\label{res1412}
\bettif{
	0& 	1 &		.  & 	  &		  & 	 \\
	1& 	  &		.  & 	  &		  & 	 \\
	2& 	  &		4 & 	  &		  & 	 \\
	3& 	  &		5 & 	18&		12& 	2 
	}	
\qquad \qquad
\bettit{
0&	    1  & 	 &		 & 	 		   	 \\
1& 	.  &		  & 	  &		  	 \\
2& 	2  &	14	 & 15	  & 2		   	 \\
3& 	  &		 & 	&		 2
}
	\end{equation}
	In particular, the cubic threefolds containing $C$ form a $\PP^3$. The minimal resolution of $\Gamma_*(\sO_C)$ as a module over the homogeneous coordinate ring of a  cubic threefold $X \supset C$ is eventually 2-periodic with Betti numbers
	\[
\begin{array}{c|ccccccc}
& 	\rule{0.5ex}{0pt} 0 \rule{0.5ex}{0pt} &	\rule{0.5ex}{0pt} 1 \rule{0.5ex}{0pt} & \rule{0.5ex}{0pt} 2 \rule{0.5ex}{0pt} &		\rule{0.5ex}{0pt} 3 \rule{0.5ex}{0pt} & \rule{0.5ex}{0pt} 4 \rule{0.5ex}{0pt} &  \dotso \\ \hline
0&	    1  & 	 	  &		    &		   	 \\
1& 	.  &		  & 	    &		  	 \\
2& 	2  &	13	  & 15	    & 2		   	 \\
3& 	   &		  & 2		& 15 & 15 & \dotso\\
4&        & & & & 2 & \dotso
\end{array}
	\]
	
	\begin{proof}
We assume that the maps $\HH^0(\PP^4,\sO_{\PP^4}(n))\rightarrow \HH^0(\PP^4,\sO_{C}(n))$ are of maximal rank, i.e., $C$ has maximal rank. Since $\sO_C(n)$ is non-special for $n \geq 2$, by Riemann--Roch we can compute the Hilbert function of the homogeneous coordinate ring of $C$ and therefore the numerator of its Hilbert series
\[
(1-t)^5H_C(t)=1-4t^3-5t^4+18t^5-12t^6+2t^7.
\]
Thus, we expect the Betti table of $S/I_C$ to look like the one in (\ref{res1412}). Analogously, the numerator of the Hilbert series of $\Gamma_*(\sO_C)$ under the maximal rank assumption is
\[
(1-t)^5H_{\Gamma_*(\sO_C)}(t)=1+2t^2-14t^3+15t^4-2t^5-2t^6
\]
and the expected Betti table is (\ref{res1412}).

To show that the Betti tables are indeed the expected ones and that, a posteriori, a general curve $C$ is of maximal rank, we only need to exhibit a concrete example, which we construct via matrix factorizations as explained in the proof of Theorem \ref{unirationalityThm} and summarized in Algorithm \ref{algorithmUnirat}. The function \texttt{verifyAssertionsOfThePaper(1)} of \cite{SchreyerTanturriCode} produces the \Mac code needed to verify all the above assertions. Another family of examples can be obtained as explained in Corollary \ref{unirulednessThm}.

A free resolution of $\Gamma_*(\sO_C)$ as a module over the cubic hypersurface ring $S_X$ can be obtained via the Shamash construction, from which we can deduce the Betti numbers of the minimal $S_X$-resolution: 
$$\beta^{S_X}_{1,3}(\Gamma_*\sO_C)=\beta^S_{1,3}(\Gamma_*\sO_C)-1$$
 since the equation of $X$ is superfluous over $S_X$, and $\beta^{S_X}_{2,5}(\Gamma_*\sO_C)=\beta^{S_X}_{3,5}(\Gamma_*\sO_C)=2$ follows from (\ref{shamashzero}).
	\end{proof}
\end{proposition}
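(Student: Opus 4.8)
\medskip

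The plan is to pin down both Betti tables numerically from Riemann--Roch under the maximal rank hypothesis, to confirm that the numbers thus obtained are the actual minimal ones by exhibiting a single explicit curve, and finally to pass to the cubic hypersurface ring by the Shamash construction. First I would compute the two Hilbert functions. Since $C$ is non-degenerate and linearly normal, $\hh^0(\sO_C)=1$ and $\hh^0(\sO_C(1))=5$, while for $n\ge 2$ one has $\deg\sO_C(n)=14n>2g-2$, so $\sO_C(n)$ is non-special and $\hh^0(\sO_C(n))=14n-11$ by Riemann--Roch; this already fixes the Hilbert series of $\Gamma_*(\sO_C)$. For the coordinate ring, maximal rank means each restriction $S_n\to\HH^0(\sO_C(n))$ is injective or surjective, so that $\dim(S_C)_n=\min\!\left(\binom{n+4}{4},\,\hh^0(\sO_C(n))\right)$, the two rings differing exactly by $\HH^1_*(\sI_C)$, here concentrated in degree $2$ with dimension $2$. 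Clearing denominators yields the two numerators displayed in the proof, and since in each numerator every power of $t$ is hit by a single homological degree, the minimal tables compatible with the alternating sums $\sum_i(-1)^i\beta_{i,j}$ are precisely those in \eqref{res1412}.

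The delicate point---and the step I expect to be the main obstacle---is that these alternating sums bound the graded Betti numbers only from below: consecutive cancellation could introduce ghost pairs, and moreover maximal rank must be established for the general curve rather than assumed. I would dispose of both at once by producing one smooth curve $C_0$ of genus $12$ and degree $14$ (such curves exist, as $\rho(12,4,14)=2\ge 0$) whose two resolutions have exactly the predicted Betti numbers. Maximal rank is an open condition and the graded Betti numbers are upper semicontinuous in flat families, so a single such $C_0$ forces the general curve to be of maximal rank and to realise the same minimal tables. This is where matrix factorizations enter: the curve $C_0$ is built by the construction of Theorem \ref{unirationalityThm}, summarised in Algorithm \ref{algorithmUnirat}, and verified on a computer through \cite{SchreyerTanturriCode}. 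The statement about cubics is then immediate, since $\beta^S_{1,3}(S_C)=4$ and $I_C$ contains no forms of degree $\le 2$, whence $\dim(I_C)_3=4$ and the cubic threefolds through $C$ form a $\PP^3$.

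Finally, because the defining cubic $f$ lies in $I_C$ it annihilates $\Gamma_*(\sO_C)$, which is thus a module over the hypersurface ring $S_X=S/f$; over such a ring every minimal free resolution is eventually $2$-periodic. Feeding the minimal $S$-resolution of \eqref{res1412} into the Shamash construction \cite{Shamash} produces an $S_X$-resolution whose Betti numbers I can read off from those over $S$, up to two corrections. The relation expressing that $f$ annihilates the degree-$0$ generator is superfluous over $S_X$, so $\beta^{S_X}_{1,3}=\beta^S_{1,3}-1=13$. The two degree-$5$ classes appearing at homological step $2$ (inside $\overline F_0(-3)$) and at step $3$ (inside $\overline F_3$) could a priori cancel against one another, but the vanishing of the off-diagonal Shamash component \eqref{shamashzero} shows the connecting map $\overline F_3\to\overline F_0(-3)$ is zero, so both survive and $\beta^{S_X}_{2,5}=\beta^{S_X}_{3,5}=2$; the remaining entries are the $2$-periodic matrix-factorization tail $15,15,2,\dots$.
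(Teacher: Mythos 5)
Your proposal is correct and follows essentially the same route as the paper's proof: Riemann--Roch under the maximal rank hypothesis to pin down the Hilbert series numerators and hence the expected (minimal consistent) Betti tables, a single explicitly computed example (via the matrix factorization construction of Theorem \ref{unirationalityThm}/Algorithm \ref{algorithmUnirat}, checked in \cite{SchreyerTanturriCode}) combined with semicontinuity to confirm them for the general curve, and the Shamash construction over $S_X$ with exactly the two corrections the paper records, namely $\beta^{S_X}_{1,3}=\beta^S_{1,3}-1$ from the superfluous equation of $X$ and the survival of both degree-$5$ classes via the vanishing component \eqref{shamashzero}. The only difference is expository: you spell out the semicontinuity argument and the potential ghost cancellation between $\overline F_3$ and $\overline F_0(-3)$ more explicitly than the paper does, which is a faithful elaboration rather than a different method.
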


\begin{remark}
\label{posCharSuffices}
Throughout the paper we will sometimes need to exhibit explicit examples of modules defined over the rationals $\QQ$ or complex numbers $\CC$ satisfying some open conditions on their Betti numbers. Our constructions will involve only linear algebra, especially Gr\"obner basis computations, and will depend only on the choice of some parameters; a choice of rational values for the parameters thus gives rise to modules over $\QQ$, hence over $\CC$.
An ultimate goal would be to perform the computations over the function field $\QQ(t_1,\ldots,t_N)$, where $N$ is the number of free parameters.
This however is out of reach for computer algebra systems today.


We have implemented our constructions using the computer algebra system \Mac \cite{M2}. A priori it would be possible to perform these computations over $\QQ$, but this might require too much time, so instead we work over a finite prime field $\mathbb{F}_p$. We can view our choice of the initial parameters in $\mathbb{F}_p$ as the reduction modulo $p$ of some choices of parameters in $\ZZ$. Then, the so-obtained module $M_p$ can be seen as the reduction modulo $p$ of a family of modules defined over a neighborhood $\Spec \ZZ[\frac{1}{b}]$ of $(p) \in \Spec \ZZ$ for a suitable $b \in \ZZ$ with $ p \nmid  b$.
If $M_p$ satisfies our open conditions, then by semicontinuity the generic fiber $M$ satisfies the same open conditions, and so does the general element of the family over $\QQ$ or $\CC$.
\end{remark}

Let $C$ be a curve as in Proposition \ref{expected1412}. We can consider $M=\Gamma_*(\sO_C)$ as a $S_X$-module, being $X$ a generally chosen cubic threefold containing $C$. If $C$ is general, the periodic part of its minimal free resolution yields, up to twist, a matrix factorization of the form
\[
\xymatrix{
	S^{15} \oplus S^2(-1) &
	S^2(-1) \oplus S^{15}(-2)
	\ar[l]_-{\psi} &
	S^{15}(-3) \oplus S^{2}(-4) \ar[l]_-\varphi.
%
%
}
\]

\begin{definition}[Shape of a matrix factorization]
We will call the Betti numbers of the minimal  periodic resolution 
\begin{equation*}
\begin{array}{ccccc}
15 & 2 & & \\
2 & 15 & 15 & 2 \\
& & 2 & 15 & \ldots \\
\end{array}
\end{equation*}
the \emph{shape} of the matrix factorization. When the degree $s$ of the hypersurface containing the curve is fixed (in the current example we have $s=3$), then the shape of a matrix factorization is determined by the Betti numbers  $\beta(\psi)$ of $\psi$. In the current case they are
\begin{equation}
\label{shape1412}
\begin{array}{cc}
15 & 2 \\
2 & 15
\end{array}
\end{equation}
\end{definition}

In general, starting from a curve $C$ in $\PP^4$ contained in a (smooth) hypersurface $X$, the 2-periodic part of a minimal resolution of the section module $\Gamma_*(\sO_C)$ over $S_X$ will produce a matrix factorization. The shape is uniquely determined for a general pair $C \subset X \subset \PP^4$ in a component of the Hilbert scheme of pairs. For a given pair, different choices of the resolution yield equivalent  matrix factorizations. They all define the same sheaf $\sF=(\coker \varphi)^\sim$ on $X$, which turns out to be an ACM vector bundle, see e.g.\ \cite[Proposition 2.1]{CasanellasHartshorneGorenstein}.

We have thus established one way of the correspondence between curves and matrix factorizations. In what follows we will see that, to some extent, it is possible to recover the original curve from the matrix factorizations it induces.

\subsection{Monads and the Reconstruction Theorem}

Let us consider a pair $(C,X)$ of a general curve $C$ of degree 14 and genus 12 in $\PP^4$ and a general (smooth) cubic hypersurface $X=\Vi(f)\supset C$. The curve induces, up to twist, a matrix factorization of shape (\ref{shape1412})
\begin{equation*}
{\small
\xymatrix{
	\sO_X^{15}(-1) \oplus \sO_X^2(-2) &
	\ar[l]_-\psi \sO_X^2(-2) \oplus \sO_X^{2+13}(-3) &
	\ar[l]_-\varphi \sO_X^{15}(-4) \oplus \sO_X^{2}(-5).
}
}
\end{equation*}
Here, we have distinguished in $\sO_X^{2+13}(-3)$ the two copies coming directly from the third step of the resolution of $\Gamma_*{\sO_C}$ as an $S$-module, see the Shamash construction (\ref{shamashConstr}). The map $\psi$ can be regarded as a block matrix, with a zero submatrix $\sO_X^2(-2)  \leftarrow  \sO_X^2(-2) \oplus \sO_X^{2}(-3)$ by (\ref{shamashzero}).

Let $\sF = (\coker \varphi)^{\sim}$; we can form a complex
\begin{equation}
\label{monad812}
\xymatrix{
	0  &\ar[l]
	\sO_X^2(-2)  & \ar[l]_{\qquad \alpha}
	\sF  & \ar[l]_{\beta  \phantom{abcdefg1}}
	\sO_X^{2}(-2) \oplus \sO_X^{2}(-3)  & \ar[l]
	0.
	}
\end{equation}
We claim that this complex is a monad for the ideal sheaf $\sI_{C/X}$, i.e., $\alpha$ is surjective, $\beta$ injective and $\ker \alpha / \image \beta \cong \sI_{C/X}$. In other words, we can recover the original curve $C$ from the complex. The claim is a special case of the following

\begin{theorem}[Reconstruction Theorem]
	\label{reconstructionThm}
Let $C\subset \PP^4$ be a non-degenerate linearly normal curve of genus $g$ and degree ${d} \ge g$ not contained in any quadric
and let $X=\Vi(f)$ be a smooth hypersurface of degree $s$ containing $C$. Let $F_{\bullet}$ and $\overline G_{\bullet}$ be minimal free resolutions of
 $\Gamma_*(\sO_C)$ over $S$ and $S/f$ respectively, let $\varphi$ denote the syzygy map $\overline G_3 \leftarrow \overline G_4$ and $\sF=(\coker \varphi)^{\sim}(s)$. Then the complex of vector bundles on $X$
\begin{equation}
\label{monad}
\xymatrix{
	0 &\ar[l] 
	(\overline{F'_0})^{\sim} &\ar[l]_{\phantom{abcd}\alpha} 
	\sF &\ar[l]_{\beta \phantom{ab}} 
	\left(\overline{F_3}(s)\right)^{\sim} &\ar[l] 
	0,
}
\end{equation}
where the maps are induced by $\overline G_{\bullet}$ via the Shamash construction and $F'_0$ is the complement of $S$ in $F_0=S \oplus F'_0$,  is a monad for the ideal sheaf of $C$ on $X$, i.e., $\beta$ is injective, $\alpha$ is surjective, and $\ker \alpha/\image \beta \cong \sI_{C/X}$.
If $s\ge 4$ the monad is uniquely determined by $\sF$.
\end{theorem}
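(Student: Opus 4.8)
The plan is to make the two maps completely explicit through the Shamash construction and then to read off the three monad conditions from the vanishing of $\Tor^S_{\ge 2}(M,S/f)$, reducing the whole statement to the tautological sequence $0\to\sI_{C/X}\to\sO_X\to\sO_C\to 0$, where $M=\Gamma_*(\sO_C)$. Throughout I treat all free modules and syzygies as their sheafifications (vector bundles, resp. coherent sheaves) on $X$.

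First I would record the shape of the minimal $S$-free resolution $F_\bullet$ of $M$ forced by the hypotheses. Non-degeneracy and linear normality give no generators in degree $1$, while $\HH^0(\sI_C(2))=0$ controls degree $2$; since $\HH^0(\sO_C)=\KK$ this yields $F_0=S\oplus F'_0$ with the free summand $S$ generated by the unit $e_0$ and $F'_0$ generated in degrees $\ge 2$. As $M$ is saturated with $\HH^1(\sO_C)\ne 0$, one has $\depth_S M=2$, hence $\pd_S M=3$ and $F_i=0$ for $i\ge 4$. Because $fM=0$ I can fix a null-homotopy $\sigma=(\sigma_i)$ for multiplication by $f$ and run the Shamash construction to obtain an $S/f$-free resolution $\sP_\bullet$ of $M$, whose minimal summand is $\overline G_\bullet$. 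In particular $\sP_2=\overline F_2\oplus\overline F_0(-s)$ with $(\overline F_0)^{\sim}(-s)=\sO_X(-s)\oplus(\overline{F'_0})^{\sim}(-s)$, the differential on the summand $\overline F_0(-s)$ being the reduction $\overline\sigma_0$ of the homotopy, while $\sP_3=\overline F_3\oplus\overline F_1(-s)$.

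Setting $\sZ_i=\ker(\sP_i\to\sP_{i-1})$, so that $\sF=\sZ_2(s)$, I identify $\beta$ with the restriction of $\sP_3\to\sP_2$ to the fresh summand $\overline F_3$, and $\alpha$ with the composite $\sZ_2\hookrightarrow\sP_2\twoheadrightarrow(\overline F_0)^{\sim}(-s)\twoheadrightarrow(\overline{F'_0})^{\sim}(-s)$, each twisted by $s$; then $\alpha\circ\beta=0$ is immediate from \eqref{shamashzero}, since the fresh term $\overline F_3$ has no component onto the summand $\overline F_0(-s)$ of $\sP_2$. The homology is computed from two short exact sequences. On one hand, splitting $(\overline G_0)^{\sim}=\sO_X\oplus(\overline{F'_0})^{\sim}$ and using that the augmentation $\sO_X\to\sO_C$ is already surjective gives $0\to\sI_{C/X}\to\sZ_0\xrightarrow{p}(\overline{F'_0})^{\sim}\to 0$, where $\sZ_0=\ker((\overline G_0)^{\sim}\to\sO_C)$ and $p$ is the projection. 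On the other hand, the projection $q\colon\sZ_2\to(\overline F_0)^{\sim}(-s)$ has kernel $\ker(\overline F_2\to\overline F_1)$, which by $\Tor^S_2(M,S/f)=0$ equals $\image\beta$, and image exactly $\sZ_0(-s)$, because the homotopy induces a map $(\overline F_0)^{\sim}\to H_1(\overline F_\bullet)^{\sim}\cong\sO_C(-s)$ coinciding with the augmentation (here $H_1(\overline F_\bullet)^{\sim}\cong\Tor^S_1(M,S/f)^{\sim}\cong\sO_C(-s)$). This yields the folding sequence $0\to(\overline{F_3}(s))^{\sim}\xrightarrow{\beta}\sF\xrightarrow{q}\sZ_0\to 0$, with $\beta$ injective since $\Tor^S_3(M,S/f)=0$, and $\alpha=p\circ q$. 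Combining the two sequences gives $\ker\alpha/\image\beta=q^{-1}(\ker p)/\ker q\cong\ker p=\sI_{C/X}$, while $\alpha=p\circ q$ is surjective as a composite of surjections.

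The main obstacle is the identification of $\image q$: one must verify that the homotopy class $\overline\sigma_0(v)\in H_1(\overline F_\bullet)^{\sim}$ depends on $v$ only through $v|_C$, i.e. that the induced map $(\overline F_0)^{\sim}\to\sO_C(-s)$ is the augmentation, so that the solvability locus of $\overline\sigma_0(v)\in\image(\overline F_2\to\overline F_1)$ is precisely $\sZ_0(-s)$; this is where the unit generator $e_0$ and the splitting $F_0=S\oplus F'_0$ are essential. A minor point is the passage from the non-minimal $\sP_\bullet$ to $\overline G_\bullet$, discarding split trivial complexes so that $\sF=(\coker\varphi)^{\sim}(s)$ exactly, which does not affect the cohomology. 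Finally, for the uniqueness when $s\ge 4$, I would argue by a $\Hom$ bookkeeping: the outer terms $(\overline{F'_0})^{\sim}$ and $(\overline{F_3}(s))^{\sim}$ are read off from $\sF$, and for $s\ge 4$ the twist pushes $(\overline{F_3}(s))^{\sim}$ far enough that $\Hom\bigl((\overline{F_3}(s))^{\sim},(\overline{F'_0})^{\sim}\bigr)=0$ — an overlap that does occur for $s=3$ — so that, together with the ACM vanishing $\HH^1_*(\sF)=\HH^2_*(\sF)=0$, the surjection $\alpha$ and the inclusion $\beta$ are determined by $\sF$ up to isomorphism of monads.
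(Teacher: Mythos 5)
Your argument is correct in substance, but it proves the theorem by a genuinely different mechanism from the paper. The paper first pins down the Betti table of $\Gamma_*(\sO_C)$ (3-regularity, from $d\ge g$, the no-quadric hypothesis, and $\HH^0(\omega_C(-2))=0$ read off the dual resolution), proves surjectivity of $\alpha$ by showing the relevant cokernel is a finite-length submodule of the Hartshorne--Rao module, and then handles injectivity and the middle homology by duality: it applies $\sHom(-,\omega_X)$ to the map $\gamma$ out of $\sG=\ker\alpha$, recognizes $\coker\gamma^*(-2)$ as a presentation of $\omega_C$ via duality on $\PP^4$, counts ranks so that $\gamma$ drops rank in expected codimension $2$, and invokes the Hilbert--Burch theorem plus a second dualization to fix the twist $\ell=0$. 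You avoid duality and Hilbert--Burch entirely: you fold the Shamash complex into two short exact sequences, using $\Tor^S_{\ge 2}(M,S/f)=0$, $\Tor^S_1(M,S/f)\cong M(-s)$, and the key computation --- correct, and worth writing out, since $d_1\sigma_0=f\cdot\id_{F_0}$ forces the class of $\overline\sigma_0(v)$ in $H_1(\overline F_\bullet)\cong M(-s)$ to be $\epsilon(v)$ --- that the homotopy-induced map $(\overline F_0)^{\sim}(-s)\to\sO_C(-s)$ is the augmentation. This makes surjectivity of $\alpha$ immediate (a composite of two surjections, with no Hartshorne--Rao input), gives injectivity of $\beta$ from $\Tor_3=0$, and yields $\ker\alpha/\image\beta\cong\sI_{C/X}$ in one stroke; it is more elementary and self-contained, whereas the paper's route produces along the way the $\omega_C$-presentation and rank data it reuses elsewhere (e.g.\ in Proposition \ref{reconstruction1310}).

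One point you dismiss as minor does need more than you say. Your identification $\sF=\sZ_2(s)$ --- that the second syzygy of the non-minimal Shamash resolution $\sP_\bullet$ agrees with $(\coker\varphi)^{\sim}(s)$ computed from the minimal $\overline G_\bullet$ --- is safe against the cancellation between $\sP_2$ and $\sP_1$ (the equation $f$), which leaves $\ker(\sP_2\to\sP_1)$ unchanged; but a cancellation between $\sP_3$ and $\sP_2$ would add a free direct summand to $\sZ_2$ and thus change the \emph{middle term} of the monad, so it is not covered by ``discarding split trivial complexes does not affect the cohomology.'' Ruling it out requires exactly the upper degree bounds the paper establishes first: $\overline F_1(-s)$ is generated in degrees $\ge 3+s\ge 6$ while $\overline F_2$ is generated in degrees $\le 5$, so the homotopy component $\overline\sigma_1$ has no constant entries. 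You recorded the lower bounds ($\beta_{1,2}=0$, $F'_0$ in degrees $\ge 2$, $\pd_S M=3$) but not the 3-regularity coming from $\HH^0(\omega_C(-2))=0$, which is where the hypothesis $d\ge g$ does its remaining work. This is a short, fillable gap; once closed, your proof stands, and your uniqueness argument for $s\ge 4$ via twist separation is the same observation the paper makes.
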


\begin{proof}
Since ${d} \ge g$ the line bundle $\sO_C(2)$ is non-special. It follows that $\Gamma_*(\sO_C)$ has Betti table
$$
\bettit{
0&	    1  & 	 &		 & 	 		   	 \\
1& 	.  &		  &   &		  	 \\
2& 	\beta_{0,2}  &	\beta_{1,3}& \beta_{2,4}  & \beta_{3,5}	   	 \\
3& 	\beta_{0,3}  &	\beta_{1,4}	 & \beta_{2,5}	&		 \beta_{3,6} \\
}
$$
Indeed, $\beta_{1,2}=0$ by assumption. Since ${\rm Hom}(F_{\bullet},S(-5))$ resolves $\Gamma_*(\omega_C)$, we must have $\beta_{3,n}=0$ for
$n-5\ge 2$, because $\HH^0(\omega_C(-2))=0$. So $\Gamma_*\sO_C$ is 3-regular and non-zero Betti numbers can only occur in the indicated range. 

Let us assume $s=3$. The Shamash resolution starts with the Betti numbers
$$
\bettit{
0 & 1 & \phantom{\beta_{3,6}  + \beta_{1,3}} & \phantom{\beta_{3,6}  + \beta_{1,3}}	& \phantom{\beta_{3,6}  + \beta_{1,3}} \\
1& \phantom{\beta_{3,6}  + \beta_{1,3}}	                   &	                          &  \phantom{\beta_{2,4} +}\phantom{a} 1  \phantom{ab} 	                             &		  	 \\
2& 	\beta_{0,2}  &	\beta_{1,3}         & \beta_{2,4}  + 0 \phantom{ab}        & \beta_{3,5} 	\phantom{ + \beta_{1,2} a}   	 \\
3& 	\beta_{0,3}  &	\beta_{1,4}	 & \beta_{2,5}+\beta_{0,2}	& \beta_{3,6}  + \beta_{1,3}\\
4&                       &                              & 0 \phantom{ab} +                \beta_{0,3}&  0 \phantom{ab} +                \beta_{1,4}     \\
}
$$
We see that, in the induced map $\overline{F_1}\leftarrow\overline{F_0}(-3)$, there is a non-zero constant $1\times 1$ submatrix; this means that in this case the Shamash resolution is always non-minimal, and in a minimal resolution a cancellation occurs, causing $\beta_{1,3}$ to decrease by one. Such cancellation corresponds to the equation $f$ of $X$ in $S$ becoming superfluous in $S/f$.

By definition, the map $\overline G_2 \leftarrow \overline G_3$ factorizes through $\sF$. As
\[
F_0=S \oplus S^{\beta_{0,2}}(-2) \oplus S^{\beta_{0,2}}(-3), \qquad
F_3=S^{\beta_{3,5}}(-5) \oplus S^{\beta_{3,6}}(-6), 
\]
 the complex (\ref{monad}) has the form
$$
 0 \leftarrow \sO^{\beta_{0,2}}_X(-2)\oplus \sO^{\beta_{0,3}}_X(-3) \leftarrow \sF \leftarrow \sO^{\beta_{3,5}}_X(-2)\oplus \sO^{\beta_{3,6}}_X(-3) \leftarrow 0.
$$
It is indeed a complex because of (\ref{shamashzero}). We claim that the first map is surjective, the second one is injective and that the homology in the middle is isomorphic to $\sI_{C/X}$.

The first claim follows since the cokernel of the composition
$$
\sO^{\beta_{0,2}}_X(-2)\oplus \sO^{\beta_{0,3}}_X(-3) \leftarrow \sF \leftarrow \sO^{\beta_{1,3}-1}_X(-3)\oplus \sO^{\beta_{1,4}}_X(-3),
$$
where the ``${}-1$'' represents the missing equation of $X$ over $S/f$, coincides by construction with the sheafification restricted to $X$ of the cokernel of $F'_0 \leftarrow F_1$; such cokernel is a module of finite length (a submodule of the Hartshorne--Rao module of $C$), hence its sheafification is zero.

Let $\sG:=\ker(\alpha)$. Being the sheafification of a MCM module over $X$, the sheaf $\sF$ is a vector bundle and $\sG$ is a vector bundle as well. It remains to show that
$$
\xymatrix{\sG &\ar[l]_-{\gamma} \sO^{\beta_{3,5}}_X(-2)\oplus \sO^{\beta_{3,6}}_X(-3)
}
$$ 
is injective and a presentation of $\sI_{C/X}$. To see this, we apply the functor $\sHom(-, \omega_X)$ to $\gamma$ and obtain  $$
\xymatrix{\sG^*(-2) \ar[r]^-{\gamma^*(-2)} & \sO^{\beta_{3,5}}_X\oplus \sO^{\beta_{3,6}}_X(1).
}
$$ 
The cokernel of this map coincides by construction with the cokernel of the dual of the sheafification of the last map of $F_{\bullet}$
$$
\xymatrix{
\sO_X^{\beta_{2,4}}(-1)\oplus \sO_X^{\beta_{2,5}} \ar[r] & \sO_X^{\beta_{3,5}} \oplus \sO_X^{\beta_{3,6}}(1),
}
$$
which is a presentation of $\omega_C$ by duality on $\PP^4$.
Since 
\begin{align*}
\rank \sF &= \rank F_0-\rank F_1+\rank F_2+\rank F_0\cr 
               &=\rank F_3+\rank F'_0+1,
\end{align*}
we have $\rank \sG = \beta_{3,5} + \beta_{3,6} +1$. Hence both $\gamma^*(-2)$ and $\gamma$ drop rank in expected codimension $2$; applying again $\sHom(-, \omega_X)$ to $\gamma^*(-2)$ we get that $\gamma$ is injective and by the Hilbert--Burch Theorem \cite[Theorem 20.15]{Eisenbud} it fits into an exact complex
\[
\xymatrix{
	0 &
	\ar[l]  \sO_C(\ell) &
	\ar[l] \sO_X(\ell) &
	\ar[l]  \sG   &
	\sO^{\beta_{3,5}}_X(-2)\oplus \sO^{\beta_{3,6}}_X(-3) \ar[l]_-{\gamma} &
	0 \ar[l]
}
\]
for some $\ell$. By applying again $\sHom(-, \omega_X)$ to this last exact sequence one gets that $\gamma^*(-2)$ is a presentation of $\omega_C(-\ell)$, hence $\ell=0$.

The argument for  $s\ge 4$ is similar, the only difference being that the second and third term in the Shamash resolution of $\Gamma_*(\sO_C)$ differ in their twist. For example, the third term is
$S^{\beta_{3,5}}(-5)\oplus S^{\beta_{3,6}}(-6) \oplus S^{\beta_{1,3}}(-3-s)\oplus S^{\beta_{1,4}}(-4-s)$, and we see that for $s\ge 4$ the monad is uniquely determined by $\sF$.
\end{proof}

\section{General matrix factorizations and uniruledness}	
\label{uniruledness}
	
In the last section we saw how, from a matrix factorization induced by a curve $C$, it is possible to recover $C$ itself. Within this section we will show how we can use the Reconstruction Theorem to actually \emph{construct} new curves on a hypersurface $X$, starting from a general matrix factorization $(\psi,\varphi)$ on $X$
. The key point for proving this result is exhibiting, case by case in the range of interest of the paper
\[
(g,{d}) \in \{(12,14), (13,15), (16,17), (17,18), (18,19), (19,20), (20,20)\},
\] a concrete example satisfying some open conditions.

This leads naturally to the strategy of constructing (unirational) families of matrix factorizations on a hypersurface $X$ to approach the problem of constructing projective curves. In the range of interest for this paper, such strategy turns out to be effective because of the following considerations.

On the one hand, one could have that a general hypersurface of the appropriate degree does not contain a curve with the prescribed genus and degree. As anticipated in the introduction and proved in the last section, Theorem \ref{generalQuartic} shows that this does not happen, so we can start with a general choice of $X$.

On the other hand, the space of matrix factorizations of a given shape on a general hypersurface may very well have many components. This leads to the following
\begin{question}
	Is the space of matrix factorizations of shape (\ref{shape1412}) on a general cubic hypersurface irreducible?
\end{question}
Other similar questions can of course be formulated for other cases of interest. Our approach will be to construct unirational families of matrix factorizations, dominants on some component (or union of components) of such space; we will then show that the points in such particular component give rise to the desired curves. This last claim requires further explanations, see Remark \ref{grassmannians} here below.

\subsection{Constructing new curves from matrix factorizations}

\begin{remark}
	\label{grassmannians}
	Let $(\psi, \varphi)$ be a general matrix factorization of shape (\ref{shape1412}) over a cubic hypersurface $X$; in particular, we have a map
	\[
	\xymatrix{
		\sO_X^{15}(-1) \oplus \sO_X^2(-2) &
		\ar[l]_-\psi \sO_X^2(-2) \oplus \sO_X^{15}(-3).
	}
	\]
	In Theorem \ref{reconstructionThm} we constructed, from a matrix factorization induced by a curve, a complex (\ref{monad812}). To find a similar complex we need to find a rank 2 subbundle $\sO_X^2(-3)$ of $\sO_X^{15}(-3)$ such that the composition
	\[
	\xymatrix{
		\sO_X^2(-2) &
		\ar[l]_-\delta\sO_X^{15}(-3) & 
		\ar@{_{(}->}[l] \sO_X^2(-3),
	}
	\]
	where $\delta$ is the map extracted from $\psi$, is zero. The map $\delta$ is represented by a matrix of linear forms, and has a kernel of dimension at least $5=15-2\cdot \hh^0(\mathcal{O}_{\mathbb{P}^4}(1))$. Having such kernel of dimension precisely $5$ is an open condition on the space of matrix factorizations which is satisfied by the concrete examples we construct in \cite{SchreyerTanturriCode}. This means that, for a given general matrix factorization, we get a complex for any choice of $\sO_X^2(-3)$ inside $\ker \delta$, which corresponds to the choice of a $p \in \GG(2,5)$. A general choice of $p$ produces a complex (\ref{monad812}); Theorem \ref{constructionThm} below will show that such complex is a monad for a smooth curve of genus 12 and degree 14.
	
	The situation is very similar in the case of curves of genus 13 and degree 15. Here we have to choose again a rank 2 subbundle $\sO_X^2(-3)$ inside the kernel, which is now $3$-dimensional in general. This yields many choices parameterized by $\GG(2,3)=\PP^2$. Again, a general choice produces a monad and a smooth curve.
\end{remark}	

\begin{theorem}
	\label{constructionThm}
	Let $(g,{d})$ be in
	\[
	\{(12,14), (13,15), (16,17), (17,18), (18,19), (19,20), (20,20)\}
	\]
	and let ${H}_{{d},g}$ be the component of the Hilbert scheme $\Hilb_{{d} t+1-g}(\PP^4)$ dominating $\sM_g$. Let $C \in {H}_{{d},g}$ be a general point, i.e., a general curve of genus $g$ and degree ${d}$ in $\PP^4$.
	\begin{enumerate}
	\item The quotient ring $S/I_C$ and the section module $\Gamma_*(\sO_C)$ have expected resolutions, i.e., their Betti tables correspond to the ones listed in Table \ref{expectedresolutions} below.
	\item Let $s = \min \{s' \mid \hh^0(\sI_C(s')) \neq 0 \}$ and consider a general hypersurface $X$ with equation $f \in (I_C)_s$. The minimal free $S/f$-resolution of $\Gamma_*(\sO_C)$ is eventually 2-periodic and gives rise to a matrix factorization of $f$ of shape as in Table \ref{shapesmonads}
.
	\item For each choice of $(g,{d})$ above, let $s$ be the (expected) minimum degree of a hypersurface containing a general curve of genus $g$ and degree ${d}$ and let $X$ be a general hypersurface of degree $s$. There is a component of the space of matrix factorizations on $X$ of shape corresponding to $(g,{d})$ in Table \ref{shapesmonads} whose general element gives rise to complexes of the form (\ref{monad}), which turns out to be a monad for $\sI_{C'/X}$, the ideal sheaf of a smooth curve $C'$ of genus $g$ and degree ${d}$ with respect to $X$.
	\end{enumerate}
	\begin{proof}
	As  in Proposition \ref{expected1412}, we can compute the expected Betti tables of the $S$-resolutions of $S/I_C$ and $\Gamma_*(\sO_C)$. These are summarized in Table \ref{expectedresolutions}. In Table \ref{shapesmonads} we list the expected shapes of the matrix factorizations and the corresponding monads we can construct.
	
	For a matrix factorization, giving rise to a monad for the ideal sheaf of a smooth curve with right genus and degree is an open condition. When the complex is not uniquely determined, i.e.\ for $s=3$ (see Remark \ref{grassmannians}), it is an open condition on the space of complexes, parametrized by a rational variety. To prove the third part of the Theorem, it is thus sufficient to explicitly construct, for each of the aforementioned cases, a matrix factorization of the given shape and a complex of the form (\ref{monad}) which is a monad for a smooth curve with assigned genus and degree. The fact that a general hypersurface $X$ of the appropriate degree $s$ contains such a curve will be proved in Theorem \ref{generalQuartic} and relies again on the computation of explicit examples.
	
	The function \texttt{verifyAssertionsOfThePaper(2)} of \cite{SchreyerTanturriCode} provides the \Mac code useful to
	produce, for each pair $(g,{d})$, a matrix factorization on a hypersurface $X$ of degree $s$ such that
	\begin{itemize}
	\item the shape of the matrix factorization is as listed in Table \ref{shapesmonads};
	\item a complex built from the matrix factorization, according to the numerology of the expected resolution of the section module of a general curve and the Reconstruction Theorem \ref{reconstructionThm}, is a monad for a smooth curve $C$ of genus $g$ and degree ${d}$;
	\item $S/I_C$ and $\Gamma_*{\sO_C}$ have expected resolutions as in Table \ref{expectedresolutions}, and $\Gamma_*{\sO_C}$ induces a matrix factorization on a general supporting hypersurface $X'$ of degree $s$ of shape as in Table \ref{shapesmonads}.
	\end{itemize}
	
	To prove the first two points of the Theorem, which correspond to open conditions on ${H}_{{d},g}$, it is sufficient to check the last assertion on a particular example.
	
%
	We use different constructions to explicitly exhibit a matrix factorization satisfying the statements. For $g=12$ or $g=13$, the procedure followed can be found in Corollary \ref{unirulednessThm}. For $g=12$, an alternative way is to use curves of genus 10 and degree 13, as explained in Proposition \ref{auxiliaryE}. For $g \geq 16$, see Section \ref{familiesOfCurves}. As mentioned in Remark \ref{posCharSuffices}, it is sufficient to run our constructions over a finite field.\qedhere
	{\small
	\begin{table}[h!bt]
	\caption{Expected Betti tables.}
	\label{expectedresolutions}
	\begin{tabular}{ccc} \toprule
		$(g,{d})$	 &	$\beta_{i,j}(S/I_C)$ & $\beta_{i,j}(\Gamma_*(\sO_C))$\\ \midrule
		\rule{1.2ex}{0ex}$(12,14)$\rule{1.2ex}{0ex} & \rule{1.2ex}{0ex}$\bettif{
			0& 	1 &		.  & 	  &		  & 	 \\
			1& 	  &		.  & 	  &		  & 	 \\
			2& 	  &		4 & 	  &		  & 	 \\
			3& 	  &		5 & 	18&		12& 	2 
		}$\rule{1.2ex}{0ex} & 
		\rule{1.2ex}{0ex}$\bettit{
			0&	    1  & 	 &		 & 	 		   	 \\
			1& 	.  &		  & 	  &		  	 \\
			2& 	2  &	14	 & 15	  & 2		   	 \\
			3& 	  &		 & 	&		 2
		}$ \rule{1.2ex}{0ex}
		 \\ \midrule
		\rule{1.2ex}{0ex}$(13,15)$\rule{1.2ex}{0ex} & \rule{1.2ex}{0ex}$\bettif{
			0& 	1 &		.  & 	  &		  & 	 \\
			1& 	  &		.  & 	  &		  & 	 \\
			2& 	  &		2 & 	  &		  & 	 \\
			3& 	  &		12 & 	27&		17& 	3 
		}$\rule{1.2ex}{0ex} & 
		\rule{1.2ex}{0ex}$\bettit{
			0&	    1  & 	 &		 & 	 		   	 \\
			1& 	.  &		  & 	  &		  	 \\
			2& 	3  &	17	 & 18	  & 3		   	 \\
			3& 	  &		 & 	&		 2
		}$ \rule{1.2ex}{0ex}
		\\ \midrule
		\rule{1.2ex}{0ex}$(16,17)$\rule{1.2ex}{0ex} & \rule{1.2ex}{0ex}$\bettif{
			0& 	1 &		.  & 	  &		  & 	 \\
			1& 	  &		.  & 	  &		  & 	 \\
			2& 	  &		. & 	  &		  & 	 \\
			3& 	  &		17 & 	29&		13& 	 \\
			4& 	  &		 & 	&		1& 	1 
		}$\rule{1.2ex}{0ex} & 
		\rule{1.2ex}{0ex}$\bettit{
			0&	    1  & 	 &		 & 	 		   	 \\
			1& 	.  &		  & 	  &		  	 \\
			2& 	4  &	19	 & 18	  & 1		   	 \\
			3& 	  &		 & 	&		 3
		}$ \rule{1.2ex}{0ex}
		\\ \midrule	
		\rule{1.2ex}{0ex}$(17,18)$\rule{1.2ex}{0ex} & \rule{1.2ex}{0ex}$\bettif{
			0& 	1 &		.  & 	  &		  & 	 \\
			1& 	  &		.  & 	  &		  & 	 \\
			2& 	  &		. & 	  &		  & 	 \\
			3& 	  &		14 & 	18&		& 	 \\
			4& 	  &		 & 2	&		10& 	3 
		}$\rule{1.2ex}{0ex} & 
		\rule{1.2ex}{0ex}$\bettit{
			0&	    1  & 	 &		 & 	 		   	 \\
			1& 	.  &		  & 	  &		  	 \\
			2& 	5  &	22	 & 21	  & 2		   	 \\
			3& 	  &		 & 	&		 3
		}$ \rule{1.2ex}{0ex}
		\\ \midrule
		\rule{1.2ex}{0ex}$(18,19)$\rule{1.2ex}{0ex} & \rule{1.2ex}{0ex}$\bettif{
			0& 	1 &		.  & 	  &		  & 	 \\
			1& 	  &		.  & 	  &		  & 	 \\
			2& 	  &		. & 	  &		  & 	 \\
			3& 	  &		11 & 	7&		& 	 \\
			4& 	  &		 & 	     17 &		19& 	5  
		}$\rule{1.2ex}{0ex} & 
		\rule{1.2ex}{0ex}$\bettit{
			0&	    1  & 	 &		 & 	 		   	 \\
			1& 	.  &		  & 	  &		  	 \\
			2& 	6  &	25	 & 24	  & 3		   	 \\
			3& 	  &		 & 	&		 3
		}$ \rule{1.2ex}{0ex}
		\\ \midrule
		\rule{1.2ex}{0ex}$(19,20)$\rule{1.2ex}{0ex} & \rule{1.2ex}{0ex}$\bettif{
			0& 	1 &		.  & 	  &		  & 	 \\
			1& 	  &		.  & 	  &		  & 	 \\
			2& 	  &		. & 	  &		  & 	 \\
			3& 	  &		8 & 	&		& 	 \\
			4& 	  &		4 & 	32&		28& 	7 
		}$\rule{1.2ex}{0ex} & 
		\rule{1.2ex}{0ex}$\bettit{
			0&	    1  & 	 &		 & 	 		   	 \\
			1& 	.  &		  & 	  &		  	 \\
			2& 	7  &	28	 & 27	  & 4		   	 \\
			3& 	  &		 & 	&		 3
		}$ \rule{1.2ex}{0ex}
		\\ \midrule
		\rule{1.2ex}{0ex}$(20,20)$\rule{1.2ex}{0ex} & \rule{1.2ex}{0ex}$\bettif{
			0& 	1 &		.  & 	  &		  & 	 \\
			1& 	  &		.  & 	  &		  & 	 \\
			2& 	  &		. & 	  &		  & 	 \\
			3& 	  &		9 & 	.&		& 	 \\
			4& 	  &		 & 26	&	24& 	6
		}$\rule{1.2ex}{0ex} & 
		\rule{1.2ex}{0ex}$\bettit{
			0&	    1  & 	 &		 & 	 		   	 \\
			1& 	.  &		  & 	  &		  	 \\
			2& 	6  &	24	 & 21	  & .		   	 \\
			3& 	  &		 & 	&		 4
		}$ \rule{1.2ex}{0ex}
		 \\ \bottomrule
	\end{tabular}
	\end{table}
}
	
	{\small
	\begin{table}[h!bt]
		\caption{Shapes of the matrix factorizations and corresponding monads.}
		\label{shapesmonads}
		\begin{tabular}{ccc} \toprule
			$(g,{d})$	 &	shape of $\psi$ & monad\\ \midrule
			$(12,14)$ & 
			$\begin{array}{cc}
				15 & 2 \\
				2 & 15
			\end{array}$ & 
			$\xymatrix{
				\sO_X^2(-2) \oplus \sO_X^{2}(-3) \ar@{^{(}->}[r] &
				\sF \ar@{->>}[r] &
				\sO_X^{2}(-2)
			}$
			\\ \midrule
			$(13,15)$ & 
			$\begin{array}{cc}
			18 & 3 \\
			3 & 18
			\end{array}$ & 
			$\xymatrix{
				\sO_X^3(-2) \oplus \sO_X^{2}(-3) \ar@{^{(}->}[r] &
				\sF \ar@{->>}[r] &
				\sO_X^{3}(-2)
			}$
			\\ \midrule
			$(16,17)$ & 
			$\begin{array}{cc}
			19 & 1 \\
			. & 3 \\
			4 & 19			
			\end{array}$ & 
			$\xymatrix{
				\sO_X(-1) \oplus \sO_X^{3}(-2) \ar@{^{(}->}[r] &
				\sF \ar@{->>}[r] &
				\sO_X^{4}(-2)
			}$
			\\ \midrule
			$(17,18)$ & 
			$\begin{array}{cc}
			22 & 2 \\
			. & 3 \\
			5 & 22
			\end{array}$ & 
			$\xymatrix{
				\sO_X^2(-1) \oplus \sO_X^{3}(-2) \ar@{^{(}->}[r] &
				\sF \ar@{->>}[r] &
				\sO_X^{5}(-2)
			}$
			\\ \midrule
			$(18,19)$ & 
			$\begin{array}{cc}
			25 & 3 \\
			. & 3 \\
			6 & 25
			\end{array}$ & 
			$\xymatrix{
				\sO_X^3(-1) \oplus \sO_X^{3}(-2) \ar@{^{(}->}[r] &
				\sF \ar@{->>}[r] &
				\sO_X^{6}(-2)
			}$
			\\ \midrule
			$(19,20)$ & 
			$\begin{array}{cc}
		    28 & 4 \\
		    . & 3 \\
		    7 & 28
			\end{array}$ & 
			$\xymatrix{
				\sO_X^4(-1) \oplus \sO_X^{3}(-2) \ar@{^{(}->}[r] &
				\sF \ar@{->>}[r] &
				\sO_X^{7}(-2)
			}$
			\\ \midrule
			$(20,20)$ & 
			$\begin{array}{cc}
			22 & . \\
			. & 4\\
			6 & 24
			\end{array}$ & 
			$\xymatrix{
				\sO_X^{4}(-2) \ar@{^{(}->}[r] &
				\sF \ar@{->>}[r] &
				\sO_X^{6}(-2)
			}$
			\\ \bottomrule
		\end{tabular}
	\end{table}
}
	\end{proof}
	\end{theorem}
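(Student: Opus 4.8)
The plan is to reduce all three assertions to the verification of finitely many explicit examples, one for each pair $(g,d)$, by exploiting that every property in the statement is an open condition on the relevant parameter space. First I would fix the numerology. Assuming that $C$ is of maximal rank---itself an open condition---Riemann--Roch determines the Hilbert functions of $S/I_C$ and of $\Gamma_*(\sO_C)$, hence the numerators of their Hilbert series, exactly as in the computation preceding Proposition \ref{expected1412}. This pins down the entries of Table \ref{expectedresolutions} as the minimal (generic) Betti numbers compatible with these Hilbert series, i.e.\ those arising when no consecutive cancellation occurs. Since Betti numbers are upper semicontinuous, the locus in ${H}_{{d},g}$ where they attain these minimal values is open; to prove part~(1) it therefore suffices to exhibit a single curve lying in ${H}_{{d},g}$ and realizing the expected tables. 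Part~(2) is then formal: feeding the expected $S$-resolution into the Shamash construction produces an $S/f$-resolution that is eventually $2$-periodic (a general feature of modules over a hypersurface ring), whose shape I can read off from the $S$-Betti numbers after accounting for the two forced modifications---the equation of $X$ becoming superfluous over $S/f$ (lowering one Betti number by one) and the vanishing component (\ref{shamashzero})---exactly as in the $(12,14)$ case of Proposition \ref{expected1412}. The resulting shapes match Table \ref{shapesmonads}.

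For part~(3) I would argue in the reverse direction, invoking the Reconstruction Theorem \ref{reconstructionThm}. Given a general hypersurface $X$ of degree $s$ and a matrix factorization of the prescribed shape, I form the candidate complex (\ref{monad}); when $s=3$ the monad is not determined by the matrix factorization alone, so I additionally choose the rank-$2$ subbundle inside $\ker\delta$ described in Remark \ref{grassmannians}, i.e.\ a point of the relevant Grassmannian. The three requirements---$\beta$ injective, $\alpha$ surjective, and middle homology isomorphic to $\sI_{C'/X}$ for a smooth curve $C'$ of genus $g$ and degree $d$---are each open, on the space of matrix factorizations and, for $s=3$, on the rationally parametrized space of complexes lying over it. Hence it again suffices to produce one matrix factorization, together with one admissible Grassmannian choice, for which the complex is a monad for a smooth curve with the correct invariants; the existence of such an example exhibits a component of the space of matrix factorizations whose general element has the desired property. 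The Reconstruction Theorem guarantees that any such monad does recover a curve with the expected section module, closing the loop with parts~(1)--(2).

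It remains to supply the explicit examples, and this is where the real work---and the main obstacle---lies. The delicate point is not the openness bookkeeping but the actual construction of a matrix factorization of the given shape on a general hypersurface, together with the verification that its monad has as homology the ideal sheaf of a \emph{smooth} irreducible curve of the prescribed genus and degree, rather than a singular or reducible scheme, or one with the wrong cohomology. I would defer these constructions to the dedicated analyses: for $g=12,13$ to the general matrix factorizations of Corollary \ref{unirulednessThm} (with the alternative genus-$10$, degree-$13$ route of Proposition \ref{auxiliaryE} for $g=12$), and for $g\ge 16$ to the rational surfaces of Section \ref{familiesOfCurves}. The nonemptiness needed to start from a general $X$ of degree $s$---namely that such an $X$ contains a curve with the right invariants---is precisely Theorem \ref{generalQuartic}, also established by explicit example. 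Finally, following Remark \ref{posCharSuffices}, I would carry out each construction over a finite prime field $\FF_p$ and appeal to semicontinuity to transfer the open conditions to $\QQ$ and $\CC$, so that all verifications reduce to the \Mac computations collected in \cite{SchreyerTanturriCode}.
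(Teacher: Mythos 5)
Your proposal is correct and follows essentially the same route as the paper: reduce all three parts to open conditions (semicontinuity of Betti numbers for (1), the Shamash analysis as in Proposition \ref{expected1412} for (2), openness of the monad conditions---together with the Grassmannian choice of Remark \ref{grassmannians} when $s=3$---for (3)), then verify each on a single explicit example per pair $(g,d)$, constructed over a finite field via Corollary \ref{unirulednessThm}, Proposition \ref{auxiliaryE}, and Section \ref{familiesOfCurves}, with Theorem \ref{generalQuartic} supplying the generality of $X$ and Remark \ref{posCharSuffices} transferring the conclusions to characteristic zero. This matches the paper's proof, including its reliance on the \Mac computations in \cite{SchreyerTanturriCode}.
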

	
\begin{remark}
		Theorem \ref{constructionThm} holds also in the case of curves of genus 15 and degree 16; the study of that particular case allowed the first author to construct some unirational families of such curves and to show the uniruledness of $\sW^4_{16,15}$ \cite{SchreyerMatrix}. The case of genus 16 and degree 17 was already the topic of  the master's thesis \cite{MuellerThesis}. 
\end{remark}

	\begin{remark}
		We expect Theorem \ref{constructionThm} to hold in other circumstances as well. Our interest in the cases above has the following reasons.
		
		The first two cases correspond to the Brill--Noether spaces $\sW^4_{12,14}$ and $\sW^4_{13,15}$, which by Serre duality are birational to $\sW^1_{12,8}$ and $\sW^1_{13,9}$ respectively. 
		
		The remaining cases are motivated by a (so far unsuccessful) attempt of proving the unirationality of the moduli space $\sM_g$ for $g \geq 16$. We have chosen ${d}$ such that $\rho(g,4,{d})=g - 5\hh^1(\sO_C(1))$ takes the minimal non-negative value. See Section \ref{families} for further details.
		
		There are cases in which we do not expect the Theorem to hold, at least not in the formulation above. For instance, consider 
 the family of curves of genus 14 and degree 16 in $\PP^4$ which are contained in cubic hypersurfaces. These curves forms a divisor $\sD$ in 
 $\sW^4_{14,16}$. Their matrix factorizations have the shape
		\[
		\begin{array}{cc}
		21 & 4 \\
		4 & 21
		\end{array}
		\]
and we would need  a rank 2 subbundle inside the kernel of the map corresponding to the last row of the Betti table above. As the kernel of a general such map is just 1-dimensional, we believe that a general matrix factorization of this shape is not  induced by any curve in $\sD$. 
	\end{remark}

\subsection{Uniruledness results}
	
A consequence of Remark \ref{grassmannians} and of Theorem \ref{constructionThm} is that, if $(g,{d})=(12,14)$ or $(13,15)$, for a fixed general matrix factorization (in the sense of Theorem \ref{constructionThm}) on a general cubic hypersurface $X$ of shape as in Table \ref{shapesmonads} we have a rational map
\begin{equation}
\label{rationalChoice}
\xymatrix{
	\sV \ar@{-->}[r] & \sW^4_{g,{d}},
}
\end{equation}
where $\sV$ is $\GG(2,5)$, $\PP^2$ respectively.

\begin{corollary}\label{unirulednessThm}
$\sW^4_{12,14}$ and $\sW^4_{13,15}$ and the corresponding $\sW^1_{12,8}$ and $\sW^1_{13,9}$ are uniruled.
	\end{corollary}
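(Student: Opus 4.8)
The plan is to exploit the rational parametrization (\ref{rationalChoice}) together with the rationality of $\sV$. Recall from Remark \ref{grassmannians} and Theorem \ref{constructionThm} that, after fixing a general matrix factorization of the appropriate shape on a general cubic threefold $X$, every point $p \in \sV$ (with $\sV = \GG(2,5)$ when $(g,{d}) = (12,14)$ and $\sV = \PP^2$ when $(g,{d}) = (13,15)$) produces a monad of the form (\ref{monad}) and hence a smooth curve $C' \subset X$ of genus $g$ and degree ${d}$, together with its hyperplane bundle $\sO_{C'}(1) \in W^4_{d}(C')$. Recording the pair $(C', \sO_{C'}(1))$ gives the rational map (\ref{rationalChoice}); letting in addition the matrix factorization vary over the unirational family produced in Theorem \ref{constructionThm} yields a dominant family $\Phi \colon \sT \dashrightarrow \sW^4_{g,{d}}$, where $\sT$ fibers over the base $\sB$ of matrix factorizations with general fiber isomorphic to $\sV$.

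First I would observe that $\sV$ is rational in both cases, so each fiber is swept out by rational curves; for instance one may take lines in $\PP^2$, or the images of lines under a birational parametrization of $\GG(2,5)$. The essential point is then to check that, for a general matrix factorization, the restricted map $\sV \dashrightarrow \sW^4_{g,{d}}$ is non-constant, i.e.\ that varying $p$ genuinely moves the pair $(C', \sO_{C'}(1))$ in $\sW^4_{g,{d}}$. Granting this, the image $Z$ of such a fiber is a positive-dimensional unirational subvariety of $\sW^4_{g,{d}}$; since $\Phi$ is dominant, a general point $(C,L) \in \sW^4_{g,{d}}$ lies on one of these images $Z$, and, $Z$ being unirational and positive-dimensional, it carries a non-constant rational curve through $(C,L)$. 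This exhibits $\sW^4_{g,{d}}$ as uniruled.

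The step I expect to be the main obstacle is precisely the verification that the fiberwise map $\sV \dashrightarrow \sW^4_{g,{d}}$ is non-constant for a general matrix factorization: a priori, distinct choices of $p$ could reproduce the same pair in moduli. I would settle this by a semicontinuity argument in the spirit of Remark \ref{posCharSuffices}, exhibiting a single explicit matrix factorization over a finite field for which two different values of $p$ yield curves that are distinct in $\sW^4_{g,{d}}$, equivalently for which the differential of $\sV \dashrightarrow \sW^4_{g,{d}}$ has positive rank at a general point; this is exactly the type of open condition verified by the \Mac code of \cite{SchreyerTanturriCode}. Once non-constancy is secured the remainder is formal, since a positive-dimensional unirational variety is uniruled.

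Finally I would transfer the conclusion to the Brill--Noether loci $\sW^1_{g,{d}'}$ via Serre duality. The assignment $(C,L) \mapsto (C, \omega_C \otimes L^{-1})$ defines a birational involution, and a Riemann--Roch computation shows that an $L \in W^4_{14}(C)$ on a curve of genus $12$ corresponds to $\omega_C \otimes L^{-1} \in W^1_8(C)$, while an $L \in W^4_{15}(C)$ on a curve of genus $13$ corresponds to an element of $W^1_9(C)$. Hence $\sW^4_{12,14}$ and $\sW^4_{13,15}$ are birational to $\sW^1_{12,8}$ and $\sW^1_{13,9}$ respectively; as uniruledness is a birational invariant, the uniruledness of the latter two spaces follows at once.
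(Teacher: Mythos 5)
Your outline matches the paper's proof in its skeleton: use the rational map (\ref{rationalChoice}) out of the rational variety $\sV$, reduce uniruledness to non-constancy of this map, note that non-constancy is an open condition on matrix factorizations to be certified on a single explicit example, and transfer to $\sW^1_{12,8}$ and $\sW^1_{13,9}$ by Serre duality (your Riemann--Roch bookkeeping here is the same as the paper's). There are, however, two points where you deviate, and both matter. First, you put a general point of $\sW^4_{g,{d}}$ into the picture by claiming that Theorem \ref{constructionThm} supplies a unirational \emph{dominant} family $\sT$ of matrix factorizations; it supplies no such thing, and for $(g,{d})=(13,15)$ such a family would prove unirationality of $\sW^1_{13,9}$, which the paper does not claim (only uniruledness). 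The paper avoids any dominance statement: it starts from a general pair $(C,L)$, embeds $C \subset \PP^4$, takes a general cubic $X \supset C$, and uses the matrix factorization induced by $C$ itself; by the Reconstruction Theorem \ref{reconstructionThm}, the point $(C,\sO_C(1))$ automatically lies in the image of the fiber $\sV$ over this matrix factorization, so the rational curve produced in that fiber passes through the given general point. Your dominance claim is repairable along exactly these lines, but as stated it rests on a result the paper does not contain. Second, your certification of non-constancy --- exhibiting two values of $p \in \sV$ whose curves are distinct in $\sW^4_{g,{d}}$, or a positive-rank differential --- is a legitimate criterion but computationally delicate: distinct subschemes of $\PP^4$ may still be projectively equivalent, so distinguishing moduli points over $\FF_p$ amounts to a projective-equivalence test, and the differential computation requires deformation theory of the pair. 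The paper's device is much cheaper: it builds the matrix factorization from a $g$-nodal \emph{rational} curve $C'$ embedded by $|K_{C'}-D|$ (see \cite{BoppMaster,BoppCode}), i.e.\ from a boundary point of $\overline{\sW}^4_{{d},g}$, and then checks that one random $p \in \sV$ yields a \emph{smooth} curve; smooth versus singular separates the two image points with a trivial verification, so the map is non-constant at this special matrix factorization, and openness propagates non-constancy to the general one. In short: your proof is correct in outline, but the two steps you delegate to computation are precisely where the paper's argument has its content, and its boundary-point trick is what makes the decisive computation feasible.
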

\begin{proof} 
Take a general point in $\sW^4_{12,14}$ or $\sW^4_{13,15}$ and choose an embedding $C \subset \mathbb{P}^4$. Consider a general cubic hypersurface $X$ containing $C$ and consider the induced matrix factorization on $X$. The induced rational map (\ref{rationalChoice}) sends a general choice of the monad to a point $\sW^4_{12,14}$, $\sW^4_{13,15}$ respectively. The image of this map is a rational variety; if it is not a point, then it contains a rational curve which passes through $C$ and whose points parametrize points of $\sW^4_{12,14}$, $\sW^4_{13,15}$ respectively, whence the conclusion. For the map (\ref{rationalChoice}), being non-constant is an open condition on the space of matrix factorizations, hence it is sufficient to check it for a concrete example.

To construct the two necessary examples, we start from a $g$-nodal rational curve $C'$ of genus $g$ having a $g^1_{2g-2-{d}}=|D|$ (see \cite{BoppMaster,BoppCode}). We embed $C'$ in $\PP^4$ via $|K_{C'}-D|$ and obtain a singular curve $C' \subset \PP^4$ of genus $g$ and degree ${d}$. We consider  the matrix factorization on a  cubic hypersurface obtained from $C'$ and choose a random point in $\sV$.  We check that the resulting curve $C$ is smooth; since $C'$ is a point in the boundary as a point in $\overline {\sW}^4_{d,g}$, the map is not constant. An implementation of the code is provided by the function \texttt{verifyAssertionsOfThePaper(2)} in \cite{SchreyerTanturriCode}.

By passing to the Serre dual linear systems, this yields the uniruledness of the corresponding spaces $\sW^1_{12,8}$ and $\sW^1_{13,9}$ as well.  \qedhere
	\end{proof}

\section{A unirational Hurwitz space}
\label{uniratHurwitz}

Our aim is to use all the machinery developed so far to construct a unirational family of curves dominating $H_{12,14}$, the component of the Hilbert scheme of curves of genus 12 and degree 14 in $\PP^4$ which dominates $\sW^4_{12,14}$. By considering the dual models, this will imply the unirationality of $\sW^1_{12,8}$ and $\sH_{12,8}$.

The idea is to use Theorem \ref{constructionThm}. If we manage to produce a large enough unirational family of general matrix factorizations, we can hope that the space of curves we obtain is dominant. In other terms, we translate the problem of constructing curves with fixed invariants to the problem of constructing matrix factorizations on cubic threefolds with an assigned shape.

\subsection{Betti tables and auxiliary modules}
\label{bettiCandidates}

Let us fix a cubic form $f \in S$. A matrix factorization of $f$ with shape (\ref{shape1412}) might be hard to construct. Nonetheless, the Shamash construction gives us a way to partially predict the shape of a matrix factorization arising as the 2-periodic part of the resolution of an arbitrary $S/f$-module $N$, provided that we know the Betti numbers $\beta_{i,j}(N)$ of $N$ as an $S$-module. Thus, a possible approach is to construct auxiliary $S$-modules $N$ giving rise over $S/f$ to a matrix factorization of $f$ with the desired shape.

For such $N$, how should its Betti table $\beta_{i,j}(N)$ look like? If we assume that no cancellation will occur when taking the minimal part of the Shamash resolution, i.e., the Shamash resolution is already minimal, a prescribed shape imposes linear conditions on the entries of a table $\beta_{i,j}$ filled with natural numbers. For instance, if we assume $\pd N < 5$, for the shape (\ref{shape1412}) such a table has the following form, up to twist:
\[
\bettif{
	0& 	\beta_{0,0}  &		\beta_{1,1}  & 	 . &	.	  & 	. \\
	1& 	\beta_{0,1}  &		\beta_{1,2}  & 	 \beta_{2,3} &	\beta_{3,4}  & 	. \\
	2& 	.  &		.  & 	 \beta_{2,4} &	\beta_{3,5}	  & 	\beta_{4,6} \\
	3& 	.  &		.  &   .&	.	& 	\beta_{4,7} 
}
\quad
\mbox{s.t.}
\quad
\left\{
\begin{array}{l}
\beta_{0,0}+\beta_{2,3}+\beta_{4,6} = 15\\ 
\beta_{0,1}+\beta_{2,4}+\beta_{4,7} = 2\\ 
\beta_{1,1}+\beta_{3,4}=2\\
\beta_{1,2}+\beta_{3,5}=15
\end{array}
\right.
\]
It turns out that a finite number of candidate Betti tables are allowed. As the transposed of a matrix factorization is again a matrix factorization, we could as well consider Betti tables giving rise to matrix factorizations with the dual shape
\[
\begin{array}{cc}
2 & .\\
15 & 15 \\
. & 2 \\
\end{array}
\]
We might also tolerate cancellations, i.e., we might assume that the Shamash resolution is not minimal; this makes the number of candidate Betti tables become infinite. However, we can always limit our search to finitely many cases, fixing for instance the entries of the tables in which we allow cancellations and an upper bound for their number.

By doing this, we end up with a list of tables; we can further limit our search to the ones lying in the Boij--S\"oderberg cone, i.e., tables $\beta_{i,j}$ for which there exists a rational number $q \in \mathbb{Q}$ and an $S$-module $M'$ such that $q\cdot\beta_{i,j}=\beta_{i,j}(M')$. It is of course convenient to let a computer deal with all the possibilities. 

\begin{example}
	\label{exampleList}
	A list of tables satisfying the aforementioned conditions can be produced by a \Mac computation, whose implementation is provided by the function \texttt{verifyAssertionsOfThePaper(3)} in \cite{SchreyerTanturriCode}. An example of a table in this list is
	\begin{equation}
	\label{auxiliaryBetti}
	\bettif{
		0& 	1 &		.  & 	  &		  & 	 \\
		1& 	  &		.  & 	  &		  & 	 \\
		2& 	  &		5 & 	  &		  & 	 \\
		3& 	  &		2 & 	15&		11& 	2 
	}	
	\end{equation}
	\end{example}

Suppose there exists an auxiliary $S$-module $N$ with resolution $F_{\bullet}$ with Betti numbers (\ref{auxiliaryBetti}), and consider a cubic form $f$. If we apply the Shamash construction to get a resolution of $N$, it is easy to see that the induced map $\overline{F_0}(-3) \rightarrow \overline{F_1}$ has a non-zero invertible part, hence the expected shape of the induced matrix factorization is (\ref{shape1412}).

The following proposition shows that such an auxiliary module $N$ exists and its induced matrix factorization has indeed the expected shape.

\begin{proposition}
\label{auxiliaryE}
Let $E$ be a general curve of genus 10 and degree 13 in $\PP^4$ and $X=\Vi(f)$ a general cubic threefold containing it. Then the Betti table of $S/I_E$ is (\ref{auxiliaryBetti}), the matrix factorization induced by $S/I_E $ on $X$ has shape (\ref{shape1412}) and is general enough in the sense of Theorem \ref{constructionThm}, i.e., it can be used to construct curves of genus 12 and degree 14.
\end{proposition}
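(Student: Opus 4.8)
The plan is to follow the template of Proposition \ref{expected1412}: first pin down the \emph{expected} numerology under a maximal rank hypothesis, then observe that every assertion in the statement is an open condition, and finally discharge everything on a single explicit example over a finite field, transporting to characteristic zero by Remark \ref{posCharSuffices}. Concretely, I would first assume that $E$ has maximal rank, i.e.\ that the maps $\HH^0(\PP^4,\sO(n)) \to \HH^0(E,\sO_E(n))$ all have maximal rank. Since $\deg \sO_E(n) = 13n > 2g-2 = 18$ for $n \ge 2$, the bundle $\sO_E(n)$ is non-special there and Riemann--Roch gives $\hh^0(\sO_E(n)) = 13n-9$; together with linear normality ($\hh^0(\sO_E(1))=5$) this determines the Hilbert function of $S/I_E$ and hence the numerator of its Hilbert series,
\[
(1-t)^5 H_E(t) = 1 - 5t^3 - 2t^4 + 15t^5 - 11t^6 + 2t^7 .
\]
In particular $E$ lies on no quadric and $\hh^0(\sI_E(3)) = 5$. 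Reading off this numerator under the assumption that no consecutive cancellation occurs yields exactly the expected table (\ref{auxiliaryBetti}); note that each internal degree $j \in \{3,4,5,6,7\}$ is forced into a single homological slot, so the only possible discrepancy between (\ref{auxiliaryBetti}) and the true table is a cancelling ghost pair such as $\beta_{1,5},\beta_{2,5}$, which cannot be settled from the Hilbert series alone.

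Next I would read off the expected shape of the matrix factorization via the Shamash construction applied to the $S$-resolution $F_\bullet$ of $S/I_E$ with $s=3$, as in the discussion preceding the statement. The null-homotopy $\sigma_0\colon F_0 \to F_1$ for multiplication by $f$ satisfies $\sigma_0(1)=(c_1,\dots,c_5,0,0)$ with $\sum_i c_i g_i = f$, where the $g_i$ are the five cubic generators and the quartic components vanish for degree reasons; hence the induced map $\overline{F_0}(-3)\to\overline{F_1}$ is a nonzero constant column. For $f$ general this forces a single cancellation, dropping the five cubic generators to four and deleting the induced $\overline{S}(-3)$ in homological degree $2$. Granting that no further cancellation occurs for general $(E,X)$, the surviving terms assemble into a $2$-periodic tail of shape (\ref{shape1412}), and the vanishing blocks dictated by (\ref{shamashzero}) guarantee it is genuinely a matrix factorization of $f$.

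Finally I would note that all the assertions --- maximal rank (hence the exact Hilbert function), minimality of (\ref{auxiliaryBetti}) (absence of ghost terms), the shape (\ref{shape1412}) of the induced factorization, and being ``general enough'' in the sense of Theorem \ref{constructionThm} (that the map $\delta$ of Remark \ref{grassmannians} has kernel of dimension exactly $5$ and that a general $p\in\GG(2,5)$ yields a monad for a \emph{smooth} curve of genus $12$ and degree $14$) --- are Zariski-open conditions on the family of pairs $(E,X)$. It therefore suffices to exhibit one curve $E$ satisfying them. Since $\sM_{10}$ is unirational and $\rho(10,4,13)=5\ge 0$, a general such $E$ with a general $g^4_{13}$ can be written down explicitly over a finite prime field $\FF_p$; I would run the verification with \texttt{verifyAssertionsOfThePaper} of \cite{SchreyerTanturriCode} and then invoke Remark \ref{posCharSuffices} to propagate the conclusion to $\QQ$ and $\CC$.

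The main obstacle is this last genericity step rather than any single computation: what must be controlled is not the Hilbert function (handed to us by maximal rank) but the \emph{absence} of the ghost cancellations in both the $S$-resolution and the Shamash tail, together with the dimension count for $\ker\delta$ and the smoothness of the reconstructed curve. None of these can be read off from numerology, so the real content is to produce the explicit $\FF_p$-example and to rely on semicontinuity to carry it to characteristic zero --- exactly the division of labour already used in Proposition \ref{expected1412}.
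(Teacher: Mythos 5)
Your proposal is correct and follows essentially the same route as the paper: the paper's proof consists precisely of observing that all assertions are open conditions and checking them on one explicit $\FF_p$-example (via \texttt{verifyAssertionsOfThePaper(4)}), with the example constructed exactly as you indicate --- a general genus-$10$ curve with five marked points, produced by Gei\ss's liaison construction in $\PP^1\times\PP^2$ and embedded by $|K_E-D|$ as in Algorithm \ref{algorithmUnirat}, and semicontinuity (Remark \ref{posCharSuffices}) carrying the conclusion to characteristic zero. Your additional numerology (the Hilbert-series numerator matching (\ref{auxiliaryBetti}) and the forced single cancellation in the Shamash tail giving shape (\ref{shape1412})) is accurate and corresponds to the paper's surrounding discussion in Section \ref{bettiCandidates} rather than to new content.
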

\begin{proof}
	For such a curve $E$, all the statements correspond to open conditions and it is sufficient to check them on a particular example. An implementation of its construction is provided by the function \texttt{verifyAssertionsOfThePaper(4)} in \cite{SchreyerTanturriCode}; an explanation of the procedure used is to be found in the proof of Theorem \ref{unirationalityThm} and in Algorithm \ref{algorithmUnirat}.
	\end{proof}

\subsection{Unirationality of \texorpdfstring{$\sH_{12,8}$}{H 12 8}}

Summarizing, we can use general curves $E$ of genus 10 and degree 13 to get curves $C$ of genus 12 and degree 14. Moreover, such construction is unirational; this means that a unirational family of $E$'s yields a unirational family of $C$'s. Thus, we can focus on the former in the attempt of constructing a family dominating the latter.

\begin{theorem}
	\label{unirationalityThm}
The spaces $\sW^4_{12,14}$ and $\sH_{12,8}$ are unirational.
\end{theorem}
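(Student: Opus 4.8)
The plan is to prove the statement by three formal reductions followed by one genuinely new construction, the bulk of the work being a dominance argument that is certified on an explicit example.

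First I would reduce everything to a statement about a Hilbert scheme of curves in $\PP^4$. By the discussion in Section \ref{unirationalHurwitzSpaces}, the unirationality of $\sH_{12,8}$ is equivalent to that of $\sW^1_{12,8}$: for $L\in W^1_8$ one has $\hh^0(L)=2$, so the fibres $\GG(2,\HH^0(L))$ of $\beta$ are points and $\alpha$ is a (rational) $\PGL(2)$-bundle. Serre duality $(C,L)\mapsto(C,K_C-L)$ gives a birational map $\sW^4_{12,14}\dashrightarrow\sW^1_{12,8}$, since a general $L\in W^4_{14}(C)$ has $\hh^0(L)=5$ and $\hh^1(L)=2$, whence $K_C-L$ has degree $8$ and $\hh^0(K_C-L)=2$. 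Hence it suffices to prove that $\sW^4_{12,14}$, equivalently the component $H_{14,12}$ of $\Hilb_{14t-11}(\PP^4)$ dominating $\sM_{12}$, is unirational.

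Next I would build $H_{14,12}$ from the auxiliary curves of Proposition \ref{auxiliaryE}. The family $H_{13,10}$ of genus-$10$, degree-$13$ curves is itself unirational: $\sM_{10}$ is unirational by Severi \cite{Severi} (e.g.\ via plane curves of degree $9$ with $18$ nodes), and since $\rho(10,4,13)=5\ge 0$ a general such curve carries a $5$-dimensional $W^4_{13}$, Serre-dual to the choice of a general effective divisor $D$ of degree $5$; choosing the five points on a plane model keeps the family unirational, and embedding by $|K_E-D|$ produces a unirational family of curves $E\subset\PP^4$ dominating $\sM_{10}$. For a general such $E$ and a general cubic threefold $X\supset E$ (these exist, as $\hh^0(\sI_E(3))=5$), Proposition \ref{auxiliaryE} guarantees that the induced matrix factorization has shape (\ref{shape1412}) and is general in the sense of Theorem \ref{constructionThm}. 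By Remark \ref{grassmannians}, each $p\in\GG(2,5)$ inside the $5$-dimensional kernel of the associated linear map $\delta$ determines a monad (\ref{monad}) whose homology is $\sI_{C/X}$ for a smooth curve $C$ of genus $12$ and degree $14$. This yields a rational map
\[ \Phi\colon H_{13,10}\times\GG(2,5)\dashrightarrow H_{14,12} \]
whose source is unirational.

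The hard part is the dominance of $\Phi$. The numerics are favourable: $\dim H_{13,10}=3\cdot10-3+\rho(10,4,13)+\dim\PGL(5)=56$ and $\dim\GG(2,5)=6$, while $\dim H_{14,12}=33+\rho(12,4,14)+24=59$, so the source has dimension $62$. Geometrically the construction is liaison by a net of three cubics: the residual of $E$ in a complete intersection of type $(3,3,3)$ has degree $27-13=14$ and arithmetic genus $10+\tfrac12(14-13)(9-5)=12$, exactly matching $C$, and dually the nets of cubics through a fixed $C$ form a $\GG(3,4)\cong\PP^3$, so one expects $\Phi$ to have three-dimensional fibres and hence to be dominant. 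Turning this heuristic into a proof requires showing that $d\Phi$ is surjective at one point, equivalently that some $C$ in the image is a smooth point of $H_{14,12}$ with the expected resolutions of Theorem \ref{constructionThm}(1) (which pins it to the component dominating $\sM_{12}$) and that the fibre of $\Phi$ through it has the expected dimension. This is the one step that escapes a pure dimension count; I would settle it by exhibiting a single explicit example computed in \Mac over a finite prime field \cite{SchreyerTanturriCode} and lifting it to characteristic $0$ by the semicontinuity argument of Remark \ref{posCharSuffices}.

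Granting the dominance of $\Phi$, the variety $H_{14,12}$ is unirational, and therefore so are $\sW^4_{12,14}$ and, by the reductions of the first paragraph, $\sW^1_{12,8}$ and $\sH_{12,8}$. The main obstacle is precisely the surjectivity of $d\Phi$ at an explicit point: the reductions are formal and the passage from $E$ to $C$ is guaranteed by Proposition \ref{auxiliaryE} and Theorem \ref{constructionThm}, but dominance onto the $59$-dimensional target cannot be read off from the dimension count alone and must be certified by a concrete general curve in the image.
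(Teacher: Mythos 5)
Your reductions (Serre duality $\sW^4_{12,14}\dashrightarrow\sW^1_{12,8}$, then $\sW^1_{12,8}$ versus $\sH_{12,8}$) and your construction of the unirational family coincide with the paper's; the paper parameterizes $\sM_{10,5}$ by modifying the last liaison step of Gei\ss's construction rather than via Severi-type plane models with imposed points, but that difference is immaterial. The genuine gap is in the dominance step, which you correctly single out as the crux but do not actually prove. First, a bookkeeping slip: as written, $\Phi$ is not a well-defined rational map, because the output curve depends on the cubic $X\supset E$; the source must be (an open subset of) the incidence variety of triples $(E,X,p)$ with $X\in\PP(\HH^0(\sI_E(3)))\cong\PP^4$ and $p\in\GG(2,5)$, of dimension $56+4+6=66$, with expected $7$-dimensional fibres over $H_{14,12}$ (a $\PP^3$ of cubics through $C$, a $\PP^4$ of curves $E'$ inducing the same matrix factorization on a fixed $X$, and finitely many $p$). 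Your $62$-dimensional source with $3$-dimensional fibres is the count for the liaison correspondence, not for $\Phi$. More seriously, your proposed certificate --- surjectivity of $d\Phi$ at one point, or the fibre through one point having the expected dimension --- is not a condition you can check on a single example as stated: there is no off-the-shelf way to compute the differential of a construction that passes through minimal free resolutions and monads, and upper-bounding the fibre dimension at a point requires describing \emph{all} triples $(E',X',p')$ that produce the same $C$, which is precisely the content you are missing.

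What the paper supplies at exactly this point is Proposition \ref{reconstruction1310}: from a \emph{general} matrix factorization of shape (\ref{shape1412}) on $X$ one can reconstruct the auxiliary curve --- every general choice of a quotient $\sO_X^4(-3)\leftarrow\sO_X^{15}(-3)$ composing to zero with $\psi_{11}$ (a point of $\PP^4$) yields a locally free resolution (\ref{resolutionE'}) of the ideal sheaf of a smooth $E'\in H_{13,10}$. This makes the correspondence bidirectional: a general $C\in H_{14,12}$ induces a matrix factorization of shape (\ref{shape1412}) on a general cubic through it (Proposition \ref{expected1412}), that matrix factorization arises from some $E'\in H_{13,10}$, and the Reconstruction Theorem \ref{reconstructionThm} recovers $C$ from it for a suitable $p\in\GG(2,5)$; dominance follows with no differential computation, the only thing checked on an explicit example in \cite{SchreyerTanturriCode} being the open condition that (\ref{resolutionE'}) really resolves a smooth curve for a random quotient. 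Your liaison heuristic ($C\cup E$ a complete intersection of three cubics, with $\GG(3,4)\cong\PP^3$ worth of nets through $C$) is the correct geometric picture --- the paper remarks on it and it can be developed into an independent, simpler unirationality proof of $H_{14,12}$, cf.\ \cite{KeneshlouTanturri} --- so to repair your argument you should either prove the reconstruction statement (Proposition \ref{reconstruction1310}) or carry out the liaison correspondence honestly, rather than defer to an unspecified computation of $d\Phi$.
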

\begin{proof}
Let $H_{13,10} \subset \Hilb_{13t+1-10}(\PP^4)$ and  $H_{14,12} \subset \Hilb_{14t+1-12}(\PP^4)$ denote the components whose general elements are linearly normal non-degenerate smooth curves of degree and genus $({d},g)=(13,10)$ or $(14,12)$ respectively.
These components dominate $\sW^4_{10,13}$ and $\sW^4_{12,14}$. 

We will exhibit a unirational family of curves $C$ in $H_{14,12}$ by explicitly constructing a dominant family of curves $E$.
To do that, suppose we have a unirational parameterization of $\sM_{10,5}$, the moduli space of curves of genus 10 with 5 marked points; start from a curve $E$ and an effective divisor $D$ of degree 5. The linear system $|K_E-D|$ embeds $E$ in a curve of degree 13 in $\PP^4$ by Riemann--Roch. The construction dominates $H_{13,10}$  and via matrix factorizations this unirational parameterization induces a unirational family in $H_{14,12}$.

A unirational parameterization of $\sM_{10,5}$ can be constructed as follows. In \cite{GeissUnirationality}, a dominant unirational family of $6$-gonal curves $E$ of genus $10$ is constructed by means of liaison of curves in $\PP^1 \times \PP^2$. We can moreover modify the last step of the construction (see Algorithm \ref{algorithmUnirat} below) to impose $E$ to pass through five unirationally chosen points.

Thus we have produced a unirational family of curves in $H_{14,12}$, whose general element is a smooth irreducible curve of maximal rank with expected Betti table as in Proposition \ref{expected1412}. The corresponding code is implemented in the function \texttt{randomCurveGenus12Degree14InP4} of \cite{SchreyerTanturriCode}, along the lines of Algorithm \ref{algorithmUnirat}. It remains to prove that the family of curves constructed from  pairs $(E,X)$ with $E \in H_{13,10}$ and $X \in \PP(\HH^0(\sI_E(3)))$ via matrix factorizations dominates $H_{14,12}$. For this it suffices to prove that
we can recover $E$ from a matrix factorization $(\varphi,\psi)$ of shape (\ref{shape1412}).

\begin{proposition}
	\label{reconstruction1310}
	Let $E \in H_{13,10} $ be a general curve of genus $10$ and degree $13$, 
	let $X$ be a general cubic containing $E$ and let $\sF$ be the rank $7$ vector bundle on $X$ associated to the matrix factorization induced by $N=S/I_E$, i.e., $\sF$ is the image of $\psi$
	$$
	\xymatrix{  \sO^{15}_X(-3)   \oplus \sO^2_X(-4)& &\ar[ll]_{ \begin{pmatrix} \psi_{11} & \psi_{12} \cr 0 & \psi_{22} \cr    \end{pmatrix} }  \sO^2_X(-4) \oplus \sO^{15}_X(-5)}.
	$$
	There exists an exact complex induced by the Shamash construction
	$$
	0 \leftarrow \sI_{E/X} \leftarrow \sO^4_X(-3) \oplus \sO^2_X(-4) \leftarrow \sF \leftarrow \sO^2_X(-4) \leftarrow 0;
	$$
	moreover, for a general choice of a quotient $\sO_X^4(-3) \leftarrow \sO^{15}_X(-3)$ which composes to zero with the component
	$\psi_{1,1}$ of $\psi$, the complex 
	\begin{equation}
	\label{resolutionE'}
	\sO^4_X(-3) \oplus \sO^2_X(-4) \leftarrow \sF \leftarrow \sO^2_X(-4) \leftarrow 0
	\end{equation}
	is a locally free resolution of the ideal sheaf of a smooth curve $E'\in H_{13,10}$ on $X$.
	
	Let $(\psi,\varphi)$ be a given general matrix factorization on $X$ of shape (\ref{shape1412}) and let $\sF$ be the image of $\psi$. Then the choice of the quotient $q$ as above corresponds to the choice of a point in $\PP^4$; for a general such choice, (\ref{resolutionE'}) is a locally free resolution of the ideal sheaf of a smooth curve $E'\in H_{13,10}$ on $X$.
	\end{proposition}

\begin{proof} The first step is just reversing the Shamash construction of the $S_X$-resolution of $N=S_E$. 
	
Since $X$ is smooth the kernel of the map $ \sI_{E/X} \leftarrow \sO^4_X(-3) \oplus \sO^2_X(-4)$ is already a vector bundle $\sG$ on $X$. The bundle $\sF$ surjects onto $\sG$  with the image of $\sF \leftarrow \sO^2_X(-4)$ contained in the kernel. Since the kernel of the map
$\sG \leftarrow \sF$ is a rank 2 vector bundle of the same degree as $\sO^2_X(-4)$, the induced map between the kernel and $\sO^2_X(-4)$ is an isomorphism.

The fact that, for a given (general) matrix factorization, a general choice of the quotient $q$ yields a complex (\ref{resolutionE'}) which is a locally free resolution of a smooth curve $E'\in H_{13,10}$ is an open condition both on matrix factorizations and in $\PP^4$. It is thus sufficient to check it computationally on an explicit example, as can be done with the code provided by the function \texttt{verifyAssertionsOfThePaper(5)} in \cite{SchreyerTanturriCode}.
\end{proof}

Finally, to conclude with the unirationality of $\sH_{12,8}$ we note that a general point in $\sW^4_{12,14}$ gives as Serre dual model a point in $\sW^1_{12,8}$ and conversely. Moreover, the choice of a basis of $\PP^1$ is rational, and thus we get a unirational family of $\PP^1$-coverings of degree $8$. The locus of curves in $\sH_{12,8}$ having a smooth component of the Brill--Noether locus of expected dimension is open and contains the points we explicitly construct, hence our family is dominant. This completes the proof of Theorem  \ref{unirationalityThm}.

The function \texttt{randomGenus12Degree8CoverOfP1} in \cite{SchreyerTanturriCode} is an implementation of the above unirational construction and produces a random canonical curve of genus 12 together with two hyperplanes in $\PP^{11}$ cutting out a $g^1_8$. \qedhere
\end{proof}

\begin{remark}
Let 
$M^{15 \; 2}_ {\;2 \; 15}(X)$
 denote the  component, in the space of equivalence classes of   shape (\ref{shape1412}) on a given cubic $X$, whose general element is induced by a curve $C \in H_{14,12}$.  Above we have established a unirational correspondence between spaces of curves on $X$
$$\xymatrix{ \{C \subset X \} \ar[rd]_{\GG(2,5)} && \ar[dl]^{\PP^4} \{ E \subset X \} \cr
& M^{15 \; 2}_ {\;2 \; 15}(X) &\cr
}
$$
whose fibers are open subsets of a $\GG(2,5)$ or $\PP^4$ respectively.
We may interchange the role of $C$ and $E$: since $S_C$ and $\Gamma_*(\sO_E)$ have Betti tables 
$$
\bettif{0& 	1 &		.  & 	  &		  & 	 \\
			1& 	  &		.  & 	  &		  & 	 \\
			2& 	  &		4 & 	  &		  & 	 \\
			3& 	  &		5 & 	18&		12& 	2 

} \quad \hbox{ and } \quad
\bettit{
0& 1 & & & \cr
1 & . & & & \cr
2& 2 & 15 & 18 & 5 \cr
3 & & & & 1\cr
}
$$
they both lead to matrix factorizations on $X$ of shape  $$\begin{matrix} 15 & 2 \cr 5 & 18\end{matrix}\, $$ By the Reconstruction Theorem \ref{reconstructionThm}, and the same argument as in Proposition \ref{reconstruction1310}, we get another correspondence
$$\xymatrix{ \{C \subset X \} \ar[rd]_{\GG(2,5)} && \ar[dl]^{\PP^4} \{ E \subset X \} \cr
& M^{15 \; 2}_ {\; 5 \; 18}(X) &\cr
}.
$$
We believe that this symmetry can be explained by the fact that curves $C \in H_{14,12}$ are linked to curves $E \in H_{13,10}$ via a complete intersection of three cubics:
$$ \deg C + \deg E = 27=3^3 \hbox{ and } g_C-g_E =\frac{1}{2}(C-E).((9-5)H)=2.$$
This fact yields a correspondence
$$\xymatrix{ 
& \ar[dl]_{\PP^3} \{ \hbox{c.i.}\,  C \cup E \} \ar[dr]^{\GG(3,5)}& \cr
H_{14,12}&&  H_{13,10} \cr
}
$$
and a simpler proof that $H_{14,12}$ is unirational, as further shown in \cite[Remark 3.3]{KeneshlouTanturri}.
\end{remark}

\begin{algorithm}
\label{algorithmUnirat}
Summarizing, the following construction yields a unirational parameterization of $\sW^4_{12,14}$. The first four steps are a slight modification of the construction in \cite{GeissUnirationality}. The algorithm is implemented by the function \texttt{randomCurveGenus12Degree14InP4} in \cite{SchreyerTanturriCode}.
\begin{enumerate}
	\item On $\PP^1 \times \PP^2$, start with a rational curve of degree 4 together with 3 general lines. Call $E''$ their union.
	\item Choose two general forms $g_i \in \HH^0(\sI_{E''}(4,2))$ and construct $E'$ as the linkage of $E''$ on the complete intersection defined by $g_1, g_2$.
	\item Choose unirationally five general points $\{p_j\}$ in $\PP^1 \times \PP^2$ and choose, in the $7$-dimensional space $\HH^0(\sI_{E''}(3,3))$, two general forms $f_i$ vanishing on each $p_j$.
	\item Construct $E$ as the linkage of $E'$ in the complete intersection defined by $f_1, f_2$. By construction, $E$ passes through $p_j$, is a general curve of genus $10$ and  $D=p_1+\ldots+p_5$ is a general effective divisor of degree $5$ on $E$.
	\item Embed $E$ via $|K_E-D|$ into $\PP^4$. The curve $E \subset \PP^4$ is a general curve of genus $10$ and degree $13$.
	\item Choose a general cubic hypersurface $X \supset E$ and consider the matrix factorization on $X$ induced by $S/I_E$.
	\item Choose a general point $p \in \GG(2,5)$ as in Remark \ref{grassmannians}, construct the monad (\ref{monad}) and the corresponding curve $C \subset X$, which is a curve of genus 12 and degree 14.
\end{enumerate}
\end{algorithm}

\section{Families of curves on rational surfaces}
\label{families}	
	
In this section, we show how matrix factorizations can be used to construct unirational families of curves of genus $g$ and degree ${d}$ in $\PP^4$, with $(g,{d})$ belonging to
\[
\{(16,17), (17,18), (18,19), (19,20), (20,20)\}.
\]

The main motivation for the choice of these cases is the unknown unirationality of the corresponding moduli spaces of curves. One would like to produce a unirational family of projective curves which is dominant on the underlying moduli space of curves. As a general expectation, curves with fixed genus and lower degree should be easier to construct; the degree ${d}$ considered for each $g$ above is chosen as the minimum such that the Brill--Noether number $\rho(g,4,{d}) \geq 0$. 

\subsection{Explicit construction}

 We can try to mimic the technique used in Section \ref{bettiCandidates} and look for auxiliary modules whose Betti tables satisfy certain conditions.  
 A list of candidate Betti tables can be produced with the same technique and implementation used in Example \ref{exampleList}. Alternatively, the function \texttt{precompiledListOfCandidates} in \cite{SchreyerTanturriCode} prints precomputed lists for each genus $g \in [16,20]$.
 
 For instance, the lists contain the tables reported in Table \ref{bettiauxiliary}. All of them correspond to modules $N$ supported on a curve which will be denoted by $Z$. We will assume that $\sL=\widetilde N$ is a line bundle on $Z$. 
 	{\small
		\begin{table}[h!bt]
			\caption{Betti tables for auxiliary modules}
			\label{bettiauxiliary}
			\begin{tabular}{ccc} \toprule
				$(g,{d})$	 &	$\beta_{i,j}(N)$  & $(\codim \supp N, \deg N)$ \\ \midrule
				\rule{1.2ex}{0ex}$(16,17)$\rule{1.2ex}{0ex} & \rule{1.2ex}{0ex}$\bettif{
					0& 	6 &		10  & 	3 &		  & 	 \\
					1& 	  &		3 & 	  &		  & 	 \\
					2& 	  &		1 & 	13&		9 & 	1
				}$\rule{1.2ex}{0ex} &
$ (3, 19)$
				\\ \midrule
				\rule{1.2ex}{0ex}$(17,18)$\rule{1.2ex}{0ex} & \rule{1.2ex}{0ex}$\bettif{
					0& 	6 &		10  & 	3 &		  & 	 \\
					1& 	  &		3 & 	  &		  & 	 \\
					2& 	  &		2 & 	16&	   12 & 	2
				}$\rule{1.2ex}{0ex} &
$(3,18)$
				\\ \midrule
				\rule{1.2ex}{0ex}$(18,19)$\rule{1.2ex}{0ex} & \rule{1.2ex}{0ex}$\bettif{
					0& 	6 &		10  & 	3 &		  & 	 \\
					1& 	  &		3 & 	  &		  & 	 \\
					2& 	  &		3 & 	19&		15 & 	3
				}$\rule{1.2ex}{0ex}  &
$(3,17)$
				\\ \midrule
				\rule{1.2ex}{0ex}$(19,20)$\rule{1.2ex}{0ex} & \rule{1.2ex}{0ex}$\bettif{
					0& 	6 &		10  & 	3 &		  & 	 \\
					1& 	  &		3 & 	  &		  & 	 \\
					2& 	  &		4 & 	22&	18	  & 	4
				}$\rule{1.2ex}{0ex} &
$(3,16)$
				\\ \midrule
				\rule{1.2ex}{0ex}$(20,20)$\rule{1.2ex}{0ex} & \rule{1.2ex}{0ex}$\bettif{
					0& 	6 &		10  & 	3 &		  & 	 \\
					1& 	  &		4 & 	  &		  & 	 \\
					2& 	  &		 &	 	16&		 14& 	3
				}$\rule{1.2ex}{0ex} &
$(3,16)$
									\\  \bottomrule
			\end{tabular}
		\end{table}
	}

  The first row in these Betti tables is independent of $(g, {d})$ and the corresponding complex over $S$, dualized and sheafified,
  \begin{equation}
	\label{monadY}
	\xymatrix{
		0 \ar[r] &
		\sO_{\PP^4}^6(-4) \ar[r] &
		\sO_{\PP^4}^{10}(-3) \ar^-{\alpha}[r] &  
		\sO_{\PP^4}^{3}(-2) \ar[r] &
		0
	}
	\end{equation}
 could be a monad for the ideal sheaf of a surface $Y\subset \PP^4$.
 Two families of smooth surfaces of this kind are known: 
 \begin{itemize} \item the Alexander surfaces $Y$ \cite{Alexander}, $\PP^2$ blown up in $10$ general points embedded
 via the linear system $|14L-\sum_{i=1}^{10} E_i|$, where $L$ is the strict transform of a general line in $\PP^2$ and $E_i$ are the exceptional divisors corresponding to the 10 blown-up points, and 
 \item the blow-ups $Y'$ of  Enriques surfaces in a single point embedded by $|H-E|$, where $H$ is a Fano polarization and $E$ the exceptional divisor \cite{AureRanestad}. 
 \end{itemize}
 Both surfaces have degree $9$, $K_Y^2=-1$, sectional genus $\pi=6$ and as Hartshorne--Rao module $\HH^1_*(\sI_Y) =\coker(S^{10}(-3)\to S^3(-2))$ a module with Hilbert series $3t^2+5t^3+t^4$. They differ by the Betti numbers of their Hartshorne--Rao modules,
 which are
 $$
 \begin{tabular}{c|cccccc} 
  & 0 & 1 & 2 & 3 & 4 & 5 \cr\hline
 2 & 3 & 10 & 6 \cr
 3 &    &      & 15 &26 & 15 & 3 \cr
  4 &    &      &  1& 3&  3& 1\cr
   \end{tabular} 
\qquad  \hbox{ and } \qquad
\begin{tabular}{c|cccccc}
 & 0 & 1 & 2 & 3 & 4 & 5 \cr\hline
 2 & 3 & 10 & 6 \cr
 3 &    &      & 15 &25 & 12 & \cr
  4 &    &      &  & &  & 1\cr
  \end{tabular} 
 $$
respectively. Hence also $S_Y$ and $S_{Y'}$ have different Betti tables:
 $$
 \begin{tabular}{c|cccccc}
   & 0 & 1 & 2 & 3 & 4  \cr\hline
 0 & 1 & .\cr
 1& & .\cr
  2 & & .\cr
 3 & & .  \cr
 4 &        & 15 &26 & 15 & 3 \cr
  5 &          &  1& 3&  3& 1\cr
   \end{tabular} 
\qquad  \hbox{ and } \qquad
\begin{tabular}{c|cccccc}
  & 0 & 1 & 2 & 3 & 4  \cr\hline
 0 & 1 & . &\cr
 1& & .\cr
  2 & & .\cr
 3 & & .\cr
 4&         & 15 &25 & 12 & \cr
  5 &          &  & &  & 1\cr
   \end{tabular}
 $$
The rational surface $Y$ has a 6-secant line and contains no $(-1)$-line, while the Enriques surface has no 6-secant line and contains one $(-1)$-line.
For further details, see \cite{DeckerEinSchreyer}.

\begin{proposition}\label{intersectionXY}
If $C$ is a curve  of genus $g$ and degree ${d}$ obtained via matrix factorizations from an auxiliary module $N$ with Betti table as in Table \ref{bettiauxiliary} such that 
\begin{enumerate}
\item $\sL= \widetilde N$ is a line bundle on a curve ${Z}$ different from $C$, and
\item (\ref{monadY}) is a monad for a smooth surface $Y$ of degree $9$ as above, 
\end{enumerate}
then $C$ lies on $Y$. More precisely,
if $f \in (I_C)_4$ is any quartic which annihilates $N$ and $X=\Vi(f)$ the corresponding hypersurface, then
$$Y \cap X=C \cup {Z}.$$ 
\begin{proof} 
Since $Y$ does not lie on any quartic, the intersection $Y\cap X$ is proper
and the sequence (\ref{monadY}) restricted to $X$ 
	\begin{equation}
	\label{monadYX}
	\xymatrix{
		0 \ar[r] &
		\sO_{X}^6(-4) \ar[r] &
		\sO_{X}^{10}(-3) \ar[r] &  
		\sO_{X}^{3}(-2) \ar[r] &
		0
	}
	\end{equation}
is a monad for the ideal sheaf $\sI_{Y\cap X/X}$ of $Y\cap X$ on $X$. We claim that (\ref{monadYX}) is a subcomplex of the sheafified dual of the suitably twisted linear strand in the Shamash resolution of $N$.

For example, let us focus on the case $(g,{d})=(16,17)$. The dual linear strand reads
$$		0 \to
		\sO_{X}^{0+1}(-5) \to
		\sO_{X}^{6+13}(-4) \to
		\sO_{X}^{10+9}(-3) \to  
		\sO_{X}^{3+1}(-2) \to		0
$$
and the maps from a first to a second summand are all zero by (\ref{shamashzero}). Thus, we get a commutative diagram of monads
$$	\xymatrix{
		0 \ar[r] & \sO_{X}^6(-4) \ar[r] \ar[d] & \sO_{X}^{10}(-3) \ar[r] \ar[d] &  \sO_{X}^{3}(-2) \ar[r] \ar[d]& 0 \cr
		0 \ar[r] & \sO_{X}^3(-2)\oplus \sO_X(-1) \ar[r]  & \sF \ar[r]  &  \sO_{X}^{4}(-2) \ar[r] & 0 \cr
	}$$
where the first vertical map is up to sign a component of the dual of the first map of the $S_X$-resolution of $N$, and the third one is the inclusion induced by the Shamash resolution of $N$. The map on homology gives us a map $\sI_{Y\cap X/X} \to \sI_{C/X}$
between torsion free sheaves,
whose double dual is a map $\sO_X \to \sO_X$. Thus, to conclude that $C$ is a component of $Y \cap X$, it suffices to prove that $\sI_{Y\cap X/X} \to \sI_{C/X}$ is not the zero map. Let $\sJ$ and $\sK$ denote the kernels in the monads. We get a diagram
	$$
	\xymatrix{
		0 \ar[r] & \sO_{X}^6(-4) \ar[r] \ar[d] & \sJ \ar[r] \ar[d] &  \sI_{Y\cap X/X} \ar[r] \ar[d]& 0 \cr
		0 \ar[r] & \sO_{X}^3(-2)\oplus \sO_X(-1) \ar[r]  & \sK \ar[r]  &  \sI_{C/X} \ar[r] & 0 \cr
	}
$$
of exact sequences. 

If the map on the right was zero, we would get a homotopy $\sJ \to \sO_{X}^3(-2)\oplus \sO_X(-1)$, which since $\HH^1(\sO_X(n))=0$ for all $n$ would lift to a
map $\sO_X^{10}(-3) \to \sO_{X}^3(-2)\oplus \sO_X(-1)$ such that
$$
\xymatrix{
		 \sO_{X}^6(-4) \ar[r] \ar[d] & \sO_X^{10}(-3)  \ar[dl] \cr
		 \sO_{X}^3(-2)\oplus \sO_X(-1) & \cr
	}
$$
commutes. But this contradicts the fact that the map 
$$
\xymatrix{
S_X^6 & \ar[l] S_X^{10}(-1)\oplus S_X^3(-2) \oplus S_X(-3)
}
$$ 
is the first map in the minimal free resolution of $N$ as
an $S_X$-module. 

Therefore, $C$ is a component of $Y\cap X$. The curve ${Z}$ is also contained in $Y\cap X$. Since
$$\deg C + \deg {Z} = \deg C + \deg N =36=\deg Y \deg X$$
there are no further components, and $C \cup {Z} = Y\cap X$. The proof for the other pairs $(g,{d})$ is similar.
\end{proof}
\end{proposition}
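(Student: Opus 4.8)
The plan is to exhibit $C$ as a component of the scheme-theoretic intersection $Y\cap X$ by producing a nonzero morphism of ideal sheaves $\sI_{Y\cap X/X}\to\sI_{C/X}$, and then to close up the intersection with a degree count. First I would note that a smooth degree-$9$ surface $Y$ as above lies on no quartic, so $X\not\supseteq Y$ and $Y\cap X$ is a proper, purely one-dimensional subscheme of $X$; restricting the monad (\ref{monadY}) to $X$ then yields the complex (\ref{monadYX}), whose middle homology is $\sI_{Y\cap X/X}$.

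The heart of the argument is to compare (\ref{monadYX}) with the monad (\ref{monad}) for $\sI_{C/X}$ furnished by the Reconstruction Theorem \ref{reconstructionThm}. Both complexes are built from the same module $N$: the linear strand $\beta_{0,0},\beta_{1,1},\beta_{2,2}=6,10,3$ of the minimal $S$-resolution of $N$ is, after dualizing, twisting and sheafifying, precisely (\ref{monadY}), while the Shamash construction assembles the matrix factorization, and hence (\ref{monad}), out of the full resolution. I would write down the sheafified dual of the appropriately twisted linear strand sitting inside the Shamash resolution, observe that the maps from a first summand to a second summand vanish by (\ref{shamashzero}), and thereby read off a commutative ladder of monads from (\ref{monadYX}) to (\ref{monad}). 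Its vertical maps are, up to sign, a component of the first map of the $S_X$-resolution of $N$ and the inclusion induced by Shamash; passing to middle homology produces the sought map $\sI_{Y\cap X/X}\to\sI_{C/X}$.

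The crux, and the step I expect to be the main obstacle, is to show this map is nonzero. Once that is granted, taking double duals turns it into a nonzero endomorphism $\sO_X\to\sO_X$, i.e.\ multiplication by a scalar; as both middle homologies are rank-$1$ torsion-free ideal sheaves on the smooth $X$, this forces an inclusion $\sI_{Y\cap X/X}\subseteq\sI_{C/X}$ and hence $C\subseteq Y\cap X$. To rule out the zero map I would argue by contradiction: its vanishing would, after a diagram chase through the kernels $\sJ,\sK$ of the two monads, yield a null-homotopy $\sJ\to\sO_X^3(-2)\oplus\sO_X(-1)$; since $\HH^1(\sO_X(n))=0$ for all $n$ this lifts to a map $\sO_X^{10}(-3)\to\sO_X^3(-2)\oplus\sO_X(-1)$ through which the vertical arrow $\sO_X^6(-4)\to\sO_X^3(-2)\oplus\sO_X(-1)$ factors. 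Dualizing, this would force a component of the first syzygy map of the minimal $S_X$-resolution of $N$ to factor through another, contradicting minimality.

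It remains to close the intersection. The curve $Z=\supp\widetilde N$ lies on $X$ because $f$ annihilates $N$, and on $Y$ by the structure of the resolution of $N$, so $Z\subseteq Y\cap X$ as well. Since $Y\cap X$ is pure of dimension one and
$$\deg C+\deg Z=\deg C+\deg N=36=\deg Y\cdot\deg X,$$
no further components can occur, whence $Y\cap X=C\cup Z$. The reasoning is uniform in the five pairs $(g,d)$, with only the twists in the ladder changing according to the numerology, so I would carry out the comparison in full for one case, say $(16,17)$, and note that the remaining cases are identical.
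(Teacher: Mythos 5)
Your proposal follows the paper's own proof essentially step for step: restricting the monad (\ref{monadY}) to $X$, building the ladder of monads via the Shamash resolution and the vanishing (\ref{shamashzero}), ruling out the zero map on middle homology by the null-homotopy argument (lifting through $\HH^1(\sO_X(n))=0$ and contradicting minimality of the $S_X$-resolution of $N$), and closing with the degree count $\deg C+\deg N=36=\deg Y\cdot\deg X$. The argument is correct and matches the paper's reasoning, including the choice to work out the case $(g,d)=(16,17)$ and note the other pairs are identical.
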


	\subsection{Families of curves on rational surfaces}
	\label{familiesOfCurves}
	
We have two ways to tackle the construction of our curves $C$: we could try to produce a module $N$ having a Betti table as in Table \ref{bettiauxiliary}, then induce a matrix factorization and get a curve as described in the previous sections. A key observation is that the line bundle $\sL$ on the  curve $Z$ coincides with $\left.\omega_Y(1)\right|_Z$.
This approach works, and led us to discover Proposition \ref{intersectionXY} and the fact that some of desired curves $C$ lie on Alexander surfaces. An implementation of the construction of curves on Alexander surfaces via matrix factorizations is provided by the function \texttt{verifyAssertionsOfThePaper(6)} in \cite{SchreyerTanturriCode}.

A second, more convenient approach is to look for our desired curves $C$ directly on these surfaces, e.g., the Alexander surfaces $Y$. The genus and the degree of $C$ impose conditions on the divisor class $[C] =a_0L-\sum a_iE_i \in \Pic(Y)$.
By maximizing the dimension of the linear systems, we can maximize the dimension of the corresponding unirational families of curves. In Table \ref{dimunirationalfam} we list the linear systems achieving the maximal dimension; a general element in such linear systems is a curve which satisfies all our assertions, as one can verify by computing a single randomly chosen example, see the code provided by the function \texttt{verifyAssertionsOfThePaper(7)} in \cite{SchreyerTanturriCode}. In particular this proves the first two assertions of Theorem \ref{constructionThm}.

{\small
	\begin{table}[h!bt]
		\caption{Unirational families of curves on the Alexander surface}
		\label{dimunirationalfam}
		\begin{tabular}{ccc} \toprule
			$(g,d)$ &linear system & dimension\\\midrule
			\rule{1.2ex}{0ex}$(16,17)$\rule{1.2ex}{0ex} 
			& $21L-\sum_{i=1}^{4}7E_i-\sum_{j=5}^{10}6E_j$
			& \rule{1.2ex}{0ex} 26 \rule{1.2ex}{0ex} 
			\\ \midrule
			\rule{1.2ex}{0ex}$(17,18)$\rule{1.2ex}{0ex} 
			& $22L-\sum_{i=1}^{8}7E_i-6E_9-5E_{10}$
			& \rule{1.2ex}{0ex} 27 \rule{1.2ex}{0ex} 
			\\ \midrule
			\rule{1.2ex}{0ex}$(18,19)$\rule{1.2ex}{0ex} 
			& $19L-\sum_{i=1}^{7}6E_i-\sum_{j=8}^{10}5E_j$
			& \rule{1.2ex}{0ex} 29 \rule{1.2ex}{0ex} 
			\\ \midrule
			\rule{1.2ex}{0ex}$(19,20)$\rule{1.2ex}{0ex} 
			& $20L-7E_1-7E_2-\sum_{i=3}^{8}6E_i-5E_9-5E_{10}$
			& \rule{1.2ex}{0ex} 30 \rule{1.2ex}{0ex} 
			\\ \midrule
			\rule{1.2ex}{0ex}$(20,20)$\rule{1.2ex}{0ex} 
			& $20L-7E_1-\sum_{i=2}^{9}6E_i-5E_{10}$
			& \rule{1.2ex}{0ex} 31 \rule{1.2ex}{0ex} 
			\\  \bottomrule
		\end{tabular}
	\end{table}
}

Unfortunately, the so-constructed unirational families are far from being dominant on the corresponding moduli spaces.
Curves of same degree and genus on a blown-up Enriques surface give at best families of the same dimension.

There are many other possible choices of a candidate Betti table of $N$.
 For instance, for $g \geq 16$, other even simpler rational surfaces show up and we can 
 produce other examples of curves lying on them. Unfortunately, 
all the unirational families we have been able to construct are not dominant. Nonetheless, there is no reason why one should not be able to realize bigger families of projective models via matrix factorizations starting from different Betti tables, the biggest obstacle being of course the construction of suitable auxiliary modules $N$.

\subsection{Curves lying on a general hypersurface}
We conclude by showing that, even though the examples of curves of genus $g \geq 16$ are far from being general as projective models, we can still use them, as well as the examples of curves with lower genera constructed in the previous sections, to prove that a general hypersurface contains a whole family of them.
 
 \begin{theorem}
	\label{generalQuartic}
	A general cubic hypersurface in $\PP^4$ contains a family of dimension $2{d}$ of curves of genus $g$ and degree ${d}$ for
	\[
	(g,{d}) \in \{(12,14), (13,15)\}.
	\]
	A general quartic hypersurface in $\PP^4$ contains a ${d}$-dimensional family of curves of genus $g$ and degree ${d}$ for
	\[
	(g,{d}) \in \{(16,17), (17,18), (18,19), (19,20), (20,20)\}.
	\]	
\begin{lemma}
\label{eulerNormal}
Let $C$ be a curve of genus $g$ and degree ${d}$ in $\PP^n$ and $X$ a hypersurface of degree $s$ containing it. Then 
\[
\chi(\sN_{C/X})={d} (n+1-s)+(1-g)(n-4).
\]
\begin{proof}
	The Euler sequence of $\PP^n$ restricted to $C$ yields
	\[
	\chi(\left.\sT_{\PP^{n}}\right|_{C})=(n+1)({d}+1-g)-1+g.
	\]
	Since $\left.\sN_{X/\PP^{n}}\right|_{C} \cong \sO_{C}(s)$, from the sequence defining $\sN_{X/\PP^{n}}$ restricted to $C$ we get
	\[
	\chi(\left.\sT_{X}\right|_{C}) = (n+1)({d}+1-g)-1+g - ({d} s+1-g).
	\]
	The conclusion follows by looking at the short exact sequence defining $\sN_{C/X}$.	
\end{proof}
\end{lemma}	
	
\begin{proof}[Proof of Theorem \ref{generalQuartic}]
	Let $C$ be a general curve in $\PP^4$ of genus $g$ and degree ${d}$, and let $X$ be a general hypersurface of degree $s$ containing it, with $s$ chosen accordingly to $(g,{d})$ as in the statement of the Theorem. By Lemma \ref{eulerNormal}, $\chi(\sN_{C/X})={d} (5-s)$.
	
	We claim that $\hh^{1}(\sN_{C/X})=0$. It is sufficient to check this vanishing on one example for each pair $(g,{d})$, as can be done with the \Mac code provided by the function \texttt{verifyAssertionsOfThePaper(8)} in \cite{SchreyerTanturriCode}, and conclude by semicontinuity. Hence, $\hh^0(\sN_{C/X})={d}(5-s)$.
	
	Let $\sT_s$ be the space of threefolds of degree $s$ containing a general curve $C$ of genus $g$ and degree ${d}$, up to projective equivalences. Let $m:=\hh^0(\PP^4,\sI_C(s))-1=\binom{4+s}{4}-s{d}+g-2$. We have
		\[
		\dim(\sT_s)=\dim \sM_g + \rho({d},4,g) + m - \hh^0(\sN_{C/X}) = \binom{4+s}{4}-25. \qedhere
		\]
	\end{proof}
\end{theorem}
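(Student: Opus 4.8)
The plan is to read the statement as a dimension count on the incidence variety of pairs (curve, hypersurface), where the only genuinely geometric input is a single cohomology vanishing. Throughout, $s=3$ for the first list and $s=4$ for the second, so that the asserted family dimension $d(5-s)$ specializes to $2d$ and to $d$ respectively. First I would introduce the incidence correspondence
\[
\Sigma=\{(C,X)\mid C\subset X\}\subset H_{d,g}\times\PP^M,\qquad M=\binom{4+s}{4}-1,
\]
with $H_{d,g}$ the Hilbert scheme component dominating $\sM_g$ and $\PP^M=\PP(\HH^0(\sO_{\PP^4}(s)))$ the space of degree-$s$ hypersurfaces, and let $p_1,p_2$ be its two projections. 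Computing $\dim\Sigma$ along $p_1$: the fibre over a general $C$ is $\PP(\HH^0(\sI_C(s)))$, and Proposition \ref{expected1412} together with Theorem \ref{constructionThm} guarantee that $\sO_C(s)$ is non-special and that $C$ imposes independent conditions in degree $s$, so this fibre has the expected dimension $m=\binom{4+s}{4}-sd+g-2$. Since the fibre of $H_{d,g}\to\sM_g$ over a general curve is a $\PGL(5)$-bundle over $W^4_d(C)$, one has $\dim H_{d,g}=\dim\sM_g+\rho(g,4,d)+24$, whence $\dim\Sigma=\dim\sM_g+\rho(g,4,d)+24+m$.

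Next I would control the fibres of $p_2$. The fibre over $[X]$ is the family of curves of $H_{d,g}$ contained in $X$, whose Zariski tangent space at $[C]$ is $\HH^0(\sN_{C/X})$; if $\hh^1(\sN_{C/X})=0$ this family is smooth at $[C]$ of dimension $\hh^0(\sN_{C/X})$. By Lemma \ref{eulerNormal} with $n=4$ we have $\chi(\sN_{C/X})=d(5-s)$, so the vanishing forces $\hh^0(\sN_{C/X})=d(5-s)$. I would establish this vanishing in the style used throughout the paper: exhibit one explicit pair $(C,X)$ over a finite field with $\hh^1(\sN_{C/X})=0$, drawing $C$ from the matrix-factorization constructions of Section \ref{uniruledness} for $(12,14),(13,15)$ and from the curves on Alexander surfaces of Section \ref{families} for $16\le g\le 20$, and then conclude by semicontinuity as in Remark \ref{posCharSuffices}.

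Granting the vanishing, the generic fibre of $p_2$ has dimension $d(5-s)$, so $\dim p_2(\Sigma)=\dim\Sigma-d(5-s)$. Quotienting by the $\PGL(5)$-action, which is generically finite on hypersurfaces of degree $\ge 3$ containing a non-degenerate curve, yields the space $\sT_s$ of such threefolds up to projective equivalence, with
\[
\dim\sT_s=\dim\sM_g+\rho(g,4,d)+m-d(5-s).
\]
The final, purely arithmetic step is the case-by-case check that for each of the seven pairs this equals $\binom{4+s}{4}-25=\dim(\PP^M/\PGL(5))$; this forces $p_2$ to be dominant, and combined with the fibre dimension it shows that a general degree-$s$ hypersurface contains a $d(5-s)$-dimensional family of the desired curves.

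The one step that is not formal bookkeeping, and which I expect to be the crux, is the vanishing $\hh^1(\sN_{C/X})=0$: it is precisely what upgrades the Riemann--Roch inequality $\hh^0(\sN_{C/X})\ge\chi(\sN_{C/X})$ to an equality and makes the incidence fibres reduced of the expected dimension, so that the numerology above is exact rather than merely a bound. I see no uniform conceptual argument covering all seven pairs simultaneously, so I would settle it by one explicit computation per pair followed by semicontinuity, exactly as the rest of the paper does.
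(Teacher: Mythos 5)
Your proposal is correct and follows essentially the same route as the paper: the paper's proof is exactly your incidence count, compressed into the single formula $\dim(\sT_s)=\dim \sM_g + \rho({d},4,g) + m - \hh^0(\sN_{C/X}) = \binom{4+s}{4}-25$, with the crux being the vanishing $\hh^1(\sN_{C/X})=0$ checked on one explicit example per pair via \texttt{verifyAssertionsOfThePaper(8)} and extended by semicontinuity, just as you predicted. Your version merely makes explicit the incidence variety, the two projections, and the dominance argument that the paper leaves implicit.
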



\makeatletter
  \providecommand\@dotsep{5}
\makeatother
 \listoftodos\relax	
	
\end{document}